\newcommand{\N}{{\mathds N}}
\newcommand{\R}{{\mathds R}}
\newcommand{\Z}{{\mathds Z}}
\newcommand{\Var}{{\rm Var}}
\newcommand{\Cov}{{\rm Cov}}
\newtheorem{theorem}{Theorem}
\newtheorem{lemma}{Lemma}[section]
\newtheorem{definition}[lemma]{Definition}
\newtheorem{example}[lemma]{Example}
\begin{document}
\bibliographystyle{plain}
\title[LIL for $U$-Statistics] 
{Law of the Iterated Logarithm for $U$-Statistics of Weakly Dependent
Observations}

\author[Herold Dehling]{Herold Dehling}
\author[Martin Wendler]{Martin Wendler}
\today
\address{
Fakult\"at f\"ur Mathematik, Ruhr-Universit\"at Bo\-chum, 
44780 Bochum, Germany}
\email{herold.dehling@ruhr-uni-bochum.de}
\email{martin.wendler@ruhr-uni-bochum.de}
\keywords{Law of the iterated logarithm}
\thanks{Research supported by the DFG Sonderforschungsbereich 823
{\em Statistik nichtlinearer dynamischer Prozesse}
and the Studienstiftung des deutschen Volkes}
\begin{abstract}
The law of the iterated logarithm for partial sums of weakly dependent processes was intensively studied by Walter Philipp in the late 1960s and 1970s. In this paper, we aim to extend these results to nondegenerate $U$-statistics of data that are strongly mixing or functionals of an absolutely regular process. 

\end{abstract}
\maketitle
\centerline{{\sc Dedicated to the memory of Professor Walter 
Philipp (1936--2006)}}

\section{Introduction}

Let $(T_n)_{n\geq 1}$ be a sequence of random variables. We say that
$(T_n)_{n\geq 1}$ satisfies the law of the iterated logarithm (LIL), if 
$\Var(T_n) > 0$ for almost all $n\geq 1$ and 
\begin{eqnarray*}
\limsup_{n\rightarrow \infty} 
 \frac{T_n}{\sqrt{2\, \Var(T_n) \log \log \Var(T_n)}} &=& 1,\\
\liminf_{n\rightarrow \infty} 
 \frac{T_n}{\sqrt{2\, \Var(T_n) \log \log \Var(T_n)}} &=& -1
\end{eqnarray*}
almost surely (a.s.). The LIL was originally established 
for partial sums of independent identically distributed
random variables by Khintchine  in 1927 \cite{khin}. Hartman \& Wintner \cite{hart} were able
to prove Khintchine's result under the optimal condition
that the random variables have mean zero and finite second moments. Together
with the law of large numbers and the central limit theorem, the LIL is considered as one of the three classical limit
theorems in probability theory.

In a series of papers, starting in 1967 (\cite{phil},\cite{phi5},\cite{phi3},\cite{phi6}), Walter Philipp investigated
the LIL for partial sums of weakly dependent
processes. Independently, Iosifescu (1968 \cite{iosi}) and Reznick (1968 \cite{rezn}) 
studied the same problem; Oodaira \& Yoshihara (1971 \cite{ooda})
weakened their conditions.
In \cite{phil} Walter Philipp studied the LIL 
for stationary processes with finite moments of all order satisfying some
multiple mixing condition. In his proof Walter Philipp established 
sharp bounds on the $(2p)$-th moments of partial sums and classical techniques
such as the Borel-Cantelli lemma and maximal inequalities. In 
\cite{phi3} Walter Philipp investigated the LIL for $\psi$-mixing 
processes with finite $4$-th moment. 
The proof is based on a meta-theorem, stating that
'the LIL holds for any process for which the Borel-Cantelli lemma, the 
central
limit theorem with a reasonably good remainder and a certain maximal inequality
are valid.' This observation provided a guiding principle for many of the 
early proofs of the LIL for dependent processes.

Walter Philipp's interest in dependent processes arose from specific 
applications to analysis and probabilistic number theory. In all of
his works, Walter Philipp had very concrete applications in mind to which
he could apply his theoretical results. In a joint paper
 with Stackelberg \cite{phi6}, Walter Philipp
established the LIL for
the denominator of the $n$-th approximand in the continued fraction expansion.
The relation to weakly dependent processes is provided by the fact that
the digits in the continued fraction expansion form a $\psi$-mixing 
sequence. In \cite{phi5}, Walter Philipp investigated dynamical systems
arising from expanding piecewise linear transformations  of the unit 
interval; the map $T(x)=2\, x\, [\mod 1]$ being a special example. 
These processes can be shown to have a representation as functionals 
of an absolutely regular process.

In \cite{phi5}, Walter Philipp considered the uniform LIL,
i.e. the LIL for the supremum of partial sums of $f(X_i)-E(f(X_1))$, 
where $f$ ranges over a  class of functions. As an example, Walter Philipp
could study the discrepancy of sequences arising from expanding piecewise
linear maps. This paper marked the beginning of Walter Philipp's interest
in the LIL for empirical processes and 
for Banach space valued processes. In \cite{phi4}, Walter Philipp proved
a Strassen-type functional LIL for the empirical process of data that
have a representation as a functional of a strongly mixing process. 
In a joint paper with Kaufman \cite{kauf}, Walter Philipp
studied uniform LIL for classes of Lipschitz functions, among others for
processes of the form $X_k=\{n_k\, \omega\}$, $\omega \in [0,1]$,
where $(n_k)_{k\geq 1}$ is a lacunary sequence. 
The study of the uniform LIL leads directly to Banach space valued
random variables. The first LIL for weakly dependent Banach space valued
processes was proved by Philipp \& Kuelbs \cite{kuel} in the case
of uniformly mixing processes. Specializing to the case of Hilbert
space valued random variables, Dehling \& Philipp \cite{deh4}
extended this to strongly mixing processes.

In the early 1970s, motivated by Strassen's proof of the functional
LIL, Walter Philipp realized that almost sure 
invariance principles were ideal tools for proofs of the LIL. 
In 1974, in an AMS memoir coauthored
with Stout \cite{phi2}, Walter Philipp established almost sure invariance principles
for a large class of weakly dependent processes, including functionals of
absolutely regular processes. Philipp \& Stout were among the first to
recognize the power of the  martingale approximation technique, invented 
in 1969 by Gordin \cite{gord}. Finally, in their seminal 1979 paper \cite{ber2}, 
Berkes \& Philipp invented a new technique for
proving almost sure invariance principles that can be used also 
for vector valued processes. The Berkes-Philipp approximation technique
has been  the basis of most work on invariance principles and the 
LIL in the following decades. For an excellent survey on invariance principles see Philipp \cite{phi7}.

Many other authors have considered the LIL for 
partial sums of weakly dependent processes. Berkes (1975 \cite{berk}) treats the LIL
for trigonometric functions, Dabrowski (1985 \cite{dab2}) establishes the LIL
for associated random variables, Dabrowski \& Dehling (1988 \cite{dabr}) extended this to
weakly associated random vectors. For partial sums of strongly mixing 
processes, the sharpest results presently available are due to Rio (1995 \cite{rio}).

In the present paper, we investigate the LIL for
bivariate $U$-statistics of weakly dependent data. Given a symmetric,
measurable function $h:\R^2\rightarrow \R$ and a stationary stochastic
process, we define the $U$-statistic with kernel $h$ by
\[
 U_n(h)=\frac{1}{\binom{n}{2}} \sum_{1\leq i<j \leq n} h(X_i,X_j).
\]
Thus, $U_n(h)$ is the arithmetic mean of the values $h(X_i,X_j)$, 
$1\leq i <j \leq n$, and in that sense $U$-statistics are generalized
means. 
Many sample statistics can be written as a $U$-statistic, at least
asymptotically, and thus $U$-statistics are very important in 
statistical theory. 
$U$-statistics have been introduced independently by Halmos (1946 \cite{halm}) and
Hoeffding (1948 \cite{hoef}), in the case of i.i.d. observations. Halmos observed that
$U_n(h)$ is an unbiased estimator of $Eh(X_1,X_2)$, and in fact the 
minimum variance unbiased estimator in nonparametric models. Hoeffding
showed that $U_n(h)$ is asymptotically normal. 

\begin{example} Let $h\left(x_1,x_2\right)=\left|x_1-x_2\right|.$ Then the corresponding $U$-statistic is
\begin{equation*}
U_n\left(h\right)=\frac{2}{n(n-1)}\sum_{1\leq i<j\leq n}\left|X_i-X_j\right|,
\end{equation*}
known as Gini's mean difference.
\end{example}

\begin{example}Let $h\left(x_{1},x_{2}\right)=\int_{0}^{1}\left(\mathds{1}_{\left\{x_{1}\leq t\right\}}-t\right)\left(\mathds{1}_{\left\{x_{2}\leq t\right\}}-t\right)dt$. This leads to the following $U$-statistic:

\begin{align*}
U_{n}\left(h\right)&=\frac{1}{\binom{n}{2}}\sum_{1\leq i<j\leq n} h\left(X_{i},X_{j}\right)\\ &=\frac{1}{n\left(n-1\right)}\left(\sum_{i=1}^{n}\sum_{j=1}^{n}\int_{0}^{1}\left(\mathds{1}_{\left\{X_{i}\leq t\right\}}-t\right)\left(\mathds{1}_{\left\{X_{j}\leq t\right\}}-t\right)dt-\sum_{i=1}^{n}h\left(X_{i},X_{i}\right)\right)\\
&=\frac{n}{n-1}\int_{0}^{1}\left(\hat{F}_{n}\left(t\right)-t\right)^{2}dt-\frac{1}{n\left(n-1\right)}\sum_{i=1}^{n}h\left(X_{i},X_{i}\right)\\
&:=\frac{n}{n-1}V_{n}-\frac{1}{n\left(n-1\right)}\sum_{i=1}^{n}h\left(X_{i},X_{i}\right)
\end{align*}

$V_{n}$ is called Cramer-von Mises-Statistik and can be used for testing the hypothesis that $X_n$ has a uniform distribution on $\left[0,1\right]$ as an alternative to the Kolmogorow-Smirnoff-statistic $K_n:=\sup_{t\in\left[0,1\right]}|\hat{F}_{n}\left(t\right)-t|$ (also called discrepancy).
\end{example}

\begin{example} Let be $t\in\R$ and $h\left(x_1,x_2\right)=\mathds{1}_{\left\{\frac{1}{2}\left(x_1+x_2\right)\leq t\right\}}$. This kernel is related to the Hodges-Lehmann-estimator
\begin{equation*}
 H_n=\operatorname{median}\left\{\frac{X_i+X_j}{2}\big|1\leq i<j\leq n\right\},
\end{equation*}
as we will see later.
\end{example}

The key tool in the analysis of $U$-statistics is the Hoeffding decomposition,
introduced originally by Hoeffding (1948),
\[
 U_n(h)=\theta + \frac{2}{n} \sum_{i=1}^n h_1(X_i) 
 +U_n(h_2)
\]
Here, $\theta$, $h_1(x)$ and $h_2(x,y)$ are defined by
\begin{eqnarray*}
\theta &:=& Eh(X,Y) \\
h_1(x) &:=& Eh(x,Y) -\theta \\
h_2(x,y) &:=& h(x,y) - h_1(x) -h_1(y) -\theta,
\end{eqnarray*}
where $X, Y$ are independent random variables with the same distribution as
$X_1$. The linear term in the Hoeffding decomposition,
$\frac{2}{n} \sum_{i=1}^n h_1(X_i)$, can be treated by standard limit 
theorems for partial sum processes. Note that, by definition, 
$h_1(X_i)$ are centered (i.e. mean zero) random variables. The kernel $h_2(x,y)$ has the
property that for every $x\in \R$
\[
 Eh_2(x,Y)=0;
\]
kernels with this property are called degenerate. It turns out that 
$U_n(h_2)$ is generally stochastically dominated by the linear term, and 
thus as a result the asymptotic behavior of $U_n(h)$ is 
the same as that of $\frac{2}{n} \sum_{i=1}^n h_1(X_i)$. Depending on the
type of limit theorem and the conditions imposed on the process
$(X_i)_{i\geq 1}$, this can be more or less difficult to establish. 

For degenerate $U$-statistics of i.i.d. observations, Dehling, Denker and Philipp (1985 \cite{deh3}) and Dehling (1989 \cite{deh2}) 
established the LIL. They could show that
\[
 \limsup_{n\rightarrow \infty}
  \frac{1}{n\, \log\log n} \sum_{1\leq i<j \leq n}
 h_2(X_i,X_j) = c_{h_2},
\]
where $c_{h_2}$ is the largest eigenvalue of the integral operator with
kernel $h_2$. This was extended to mixing random variables by Kanagawa and Yoshihara \cite{kana} under the condition that the eigenvalues of $h_2$ decreas quickly, that is hard to verify in practice.

Recall that strong mixing coefficients of a stationary stochastic process
$(X_n)_{n\in\N}$ are defined by
\[
 \alpha (k) := \sup \left\{\left| P (A\cap B) - P (A) P (B) \right| : A \in \mathcal{F}^n_1, B \in \mathcal{F}^\infty_{n+k}, n \in \N \right\}
 \]
where $\mathcal{F}^l_a$ denotes the $\sigma-$field generated by the random variables $X_a, \ldots, X_l.$. For a detailed description of the various mixing conditions see Doukhan \cite{douk} and Bradley \cite{bra2}. The absolute regularity coefficients are defined as
\begin{equation*}
\beta (k) := \sup_{n\in\N}E \sup \{ \left| P (A / \mathcal{F}_{-\infty}^n) - P (A) \right| : A \in \mathcal {F}^\infty_{n + k}\},
\end{equation*}
 We say that $\left(X_n\right)_{n\in\mathds{N}}$ is strongly mixing if $\lim_{k \rightarrow \infty} \alpha (k) = 0$ and absolutely regular if $\lim_{k \rightarrow \infty} \beta (k) = 0.$ Absolute regularity is a stronger assumption than strong mixing, as $\alpha\left(k\right)\leq\beta\left(k\right)$.

We will consider strongly mixing sequences and functionals of absolutely regular sequences. Let $ \left(Z_n\right)_{n\in\mathds{Z}}$ be a stationary sequence of random variables satisfying the absolute regularity condition $\beta\left(k\right)\rightarrow 0$ as $k\rightarrow\infty$. We call a sequence $\left(X_n\right)_{n\in\mathds{N}}$ a one-sided functional of $\left(Z_n\right)_{n\in\mathds{N}}$ if there is a measurable function $f:\R^\N\rightarrow\R$ such that
\[
X_n = f (\left( Z_{n + k}\right)_{k \geq 0}).
\]
In addition we will assume that $\left(X_n\right)_{n\in\mathds{N}}$ satisfies the $r$-approximation condition:

\begin{definition} Let be $r\geq1$. We say that $\left(X_n\right)_{n\in\mathds{Z}}$  satisfies the $r$-approximating condition with constants $\left(a_n\right)_{n\in\N}$ if
\[
\left\| X_1 - E (X_1 / \mathcal {F}^l_{0}) \right\|_r \leq a_l \qquad l = 0, 1,2 \ldots
\]
where $\lim_{l \rightarrow  \infty} a_l = 0$ and $ \mathcal {F}_{0}^l$ is the $\sigma-$ field generated by $Z_{0}, \ldots, Z_l$ and $\left\|Y\right\|_r=\left(E\left|Y\right|^r\right)^{\frac{1}{r}}$.
\end{definition}

\begin{example} Let be $\left(Z_n\right)_{n\in\N}$ be independent with $P\left[X_n=1\right]=P\left[X_n=0\right]=\frac{1}{2}$ and
\begin{equation*}
 X_n=\sum_{k=n}^{\infty}\frac{1}{2^{k-n+1}}Z_k.
\end{equation*}
Note that $\left(X_n\right)_{n\in\N}$ is a deterministic sequence, as $X_{n+1}=T\left(X_n\right):=2X_n\ \mod\ 1$. Thus $\left(X_n\right)_{n\in\N}$ is not strongly mixing, but nevertheless this sequence satisfies the $r$-approximating condition for every $r\geq 1$, as
\begin{equation*}
\left\| X_1 - E (X_1 / \mathcal {F}^l_{0}) \right\|_r=\left\|\sum_{k=l+1}^{\infty}\frac{1}{2^{k+1}}Z_k\right\|_r\leq\sum_{k=l+1}^{\infty}\frac{1}{2^{k+1}}=\frac{1}{2^l}=:a_l.
\end{equation*}
\end{example}

$U$-statistic have not only been studied for i.i.d. data, but also under various mixing conditions. While under independence, the summands of $U_n\left(h_2\right)$ are uncorrelated, they can be correlated if the random variables $\left(X_n\right)_{n\in\mathds{N}}$ are dependent, so one has to establish generalized covariance inequalities to derive moment bounds for $U_n\left(h_2\right)$.

Under the strong assumption of $\star$-mixing and the existence of 4th moments, Sen \cite{sen} showed that $\sqrt{n}U_n\left(h_2\right)\rightarrow0$ a.s.. Yoshihara \cite{yosh} weakened this to absolutely regular processes. Convergence to zero in probability of $\sqrt{n}U_n\left(h_2\right)$ was proved by Denker and Keller \cite{denk} for functionals of absolutely regular processes and by Dehling and Wendler \cite{dehl} for strongly mixing sequences. The convergence of $\sqrt{n}U_n\left(h_2\right)$ together with the Central Limit Theorem for partial sums can be used to prove the asymptotic normality of nondegenerate $U$-statistics.

In 1961, Hoeffding showed that $U_{n}\left(h_2\right)\rightarrow0$ a.s. for independent observations. If $h$ is continuous, this holds under the minimal assumption that $\left(X_n\right)_{n\in\mathds{N}}$ is ergodic, as Aaronson et. al. \cite{aaro} have proved. We give better rates of convergence for absolutely regular sequences, strongly mixing sequences and functionals of absolutely regular sequences. We will apply moment inequalities and the method of subsequences. Together with the LIL for partial sums, this will imply LIL for $U$-statistics.

For independent data, second moments of the kernel are required. For mixing data, one needs higher moments: 

\begin{definition} Let $\left(X_n\right)_{n\in\mathds{N}}$ be a stationary process. A kernel $h$ has uniform $m$-moments, if for all $k\in\mathds{N}_{0}$
\begin{align*}
\iint\left|h\left(x_{1},x_{2}\right)\right|^{m}dF\left(x_{1}\right)dF\left(x_{2}\right)&\leq M,\\
\int\left|h\left(x_{1},x_{k}\right)\right|^{m}dP\left(x_{1},x_{k}\right)&\leq M.
\end{align*}
\end{definition}

In the case of strong mixing and functionals of absolutely regular processes, one needs also a continuity condition. We consider the $P$-Lipschitz condition (see Dehling, Wendler \cite{dehl}) and the variation condition introduced by Denker and Keller \cite{denk}:
\begin{definition}
\begin{enumerate}
\item A kernel $h$ is called $P$-Lipschitz-continuous with constant $L>0$ if
\begin{equation*}
 E\left[\left|h\left(X,Y\right)-h\left(X',Y\right)\right|\mathds{1}_{\left\{\left|X-X'\right|\leq\epsilon\right\}}\right]\leq L\epsilon
\end{equation*}
for every $\epsilon>0$, every pair $X$ and $Y$ with the common distribution $\mathcal{P}_{X_{1},X_{k}}$ for a $k\in\mathds{N}$ or $\mathcal{P}_{X_{1}}\times\mathcal{P}_{X_{1}}$ and $X'$ and $Y$ also with one of these common distributions.
\item A kernel $h$ satisfies the variation condition, if there is a constant $L$ such that
\begin{equation*}
 E\left[\sup_{\left\|(x,y)-(X,Y)\right\|\leq \epsilon,\ \left\|(x',y')-(X,Y)\right\|\leq \epsilon}\left|h\left(x,y\right)-h\left(x',y'\right)\right|\right]\leq L\epsilon,
\end{equation*}
where $X$, $Y$ have the common distribution $\mathcal{P}_{X_{1}}\times\mathcal{P}_{X_{1}}$ and $\left\|(x_1,x_2)\right\|=(x_1^2+x_2^2)^{1/2}$ denotes the Euclidean norm.
\end{enumerate}

\end{definition}

\begin{example} Let $h\left(x_1,x_2\right)=\left|x_1-x_2\right|.$ As this kernel is Lipschitz-continuous, it is clear that it satisfies the $P$-Lipschitz-condition and the variation condition.
\end{example}

\begin{example} Let $h\left(x_{1},x_{2}\right)=\int_{0}^{1}\left(\mathds{1}_{\left\{x_{1}\leq t\right\}}-t\right)\left(\mathds{1}_{\left\{x_{2}\leq t\right\}}-t\right)dt$. This kernel is uniformly bounded by 1 and $P$-Lipschitz-continuous with constant 1, as
\begin{align*}
&E\left[\left|h\left(X,Y\right)-h\left(X',Y\right)\right|\mathds{1}_{\left\{\left|X-X'\right|\leq\epsilon\right\}}\right]\\
=&E\left[\left|\int_{0}^{1}\left(\mathds{1}_{\left\{X\leq t\right\}}-\mathds{1}_{\left\{X'\leq t\right\}}\right)\left(\mathds{1}_{\left\{Y\leq t\right\}}-t\right)dt\right|\mathds{1}_{\left\{\left|X-X'\right|\leq\epsilon\right\}}\right]\\
\leq& E\left[\left|\int_{0}^{1}\left(\mathds{1}_{\left\{X\leq t\right\}}-\mathds{1}_{\left\{X'\leq t\right\}}\right)dt\right|\mathds{1}_{\left\{\left|X-X'\right|\leq\epsilon\right\}}\right]=E\left[\left|X-X'\right|\mathds{1}_{\left\{\left|X-X'\right|\leq\epsilon\right\}}\right]\leq\epsilon.
\end{align*}

\end{example}

\begin{example} Let be $t\in\R$ and $h\left(x_1,x_2\right)=\mathds{1}_{\left\{\frac{1}{2}\left(x_1+x_2\right)\leq t\right\}}$. Then
\begin{equation*}
\sup_{\substack{\left\|(x,y)-(X,Y)\right\|\leq \epsilon\\ \left\|(x',y')-(X,Y)\right\|\leq \epsilon}}\left|\mathds{1}_{\left\{\frac{1}{2}\left(x+y\right)\leq t\right\}}-\mathds{1}_{\left\{\frac{1}{2}\left(x'+y'\right)\leq t\right\}}\right|=\begin{cases}1 &\text{ if } \frac{X+Y}{2}\in\left(t-\frac{\epsilon}{\sqrt{2}},t+\frac{\epsilon}{\sqrt{2}}\right]\\0&\text{ else}\end{cases}
\end{equation*}
If $X_1$ has a bounded density, then the density $f_{\frac{1}{2}\left(X+Y\right)}$ of $\frac{1}{2}\left(X+Y\right)$ is also bounded, where $X$, $Y$ are independent random variables with the same distribution as $X_1$. Then
\begin{multline*}
 E\left[\sup_{\left\|(x,y)-(X,Y)\right\|\leq \epsilon,\ \left\|(x',y')-(X,Y)\right\|\leq \epsilon}\left|h\left(x,y\right)-h\left(x',y'\right)\right|\right]\\
\leq P\left[\frac{X+Y}{2}\in\left(t-\frac{\epsilon}{\sqrt{2}},t+\frac{\epsilon}{\sqrt{2}}\right]\right]\leq \left(\sqrt{2}\sup_{t\in\R}f_{\frac{1}{2}\left(X+Y\right)}\right)\cdot\epsilon
\end{multline*}
and $h$ satisfies the variation condition.
\end{example}

\noindent\textbf{Remark. } The two continuity conditions are close in spirit. The main difference is that one has to consider all common distributions of $X$, $Y$ for checking $P$-Lipschitz continuity (that can be difficult), but only the replacement of one of the arguments of $h$, while in the variation condition, both arguments of $h$ are replaced, but only the case that $X$ and $Y$ are independent has to be considered.  

\section{Main Results}

\begin{theorem}\label{theo1} Let $\left(X_n\right)_{n\in\mathds{N}}$ be a stationary process and $h_2$ a degenerate, centered kernel with uniform $(2+\delta)$-moments for some $\delta>0$. Let $\tau\geq0$ be such that one of the following three conditions hold:
\begin{enumerate}
 \item  $\left(X_n\right)_{n\in\mathds{N}}$ is absolutely regular and $\sum_{k=0}^{n}k\beta^{\frac{\delta}{2+\delta}}\left(k\right)=O\left(n^\tau\right)$.
\item $\left(X_n\right)_{n\in\mathds{N}}$ is strongly mixing, $E\left|X_1\right|^\gamma<\infty$ for a $\gamma>0$, $h_2$ satisfies the $P$-Lipschitz-continuity or the variation condition and $\sum_{k=0}^{n}k\alpha^{\frac{2\gamma\delta}{3\gamma\delta+\delta+5\gamma+2}}\left(k\right)=O\left(n^\tau\right)$.
\item $\left(X_n\right)_{n\in\mathds{N}}$ is a $1$-approximating functional of an absolutely regular process and $h_2$ satisfies the $P$-Lipschitz-continuity or the variation condition. For $\alpha_L=\sqrt{2\sum_{i=L}^{\infty}a_i}$: $\sum_{k=0}^{n}k\left(\beta^{\frac{\delta}{2+\delta}}\left(k\right)+\alpha^{\frac{\delta}{2+\delta}}_k\right)=O\left(n^\tau\right)$.
\end{enumerate}
Then:
\begin{equation}\label{line16}
 \frac{n^{1-\frac{\tau}{2}}}{\log^{\frac{3}{2}}n\log\log n}U_{n}\left(h_2\right)\xrightarrow{a.s.}0
\end{equation}
\end{theorem}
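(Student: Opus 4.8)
The plan is to reduce the claim to an almost-sure growth bound on the unnormalised sum $S_n:=\sum_{1\leq i<j\leq n}h_2(X_i,X_j)$ and then to run the classical programme of a moment inequality combined with the method of subsequences. Since $U_n(h_2)=\frac{2}{n(n-1)}S_n$, assertion \eqref{line16} is equivalent to $S_n=o\bigl(n^{1+\tau/2}\log^{3/2}n\,\log\log n\bigr)$ almost surely. Everything will follow once I establish a moment bound of the form
\begin{equation}\label{plan:moment}
E\bigl|S_n\bigr|^{2+\delta}\leq C\,n^{(1+\tau/2)(2+\delta)},
\end{equation}
whose exponent is the one dictated by the heuristic $\Var(S_n)\asymp n^{2+\tau}$ together with Gaussian scaling.

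The main obstacle is \eqref{plan:moment}. First I would expand the moment and reduce it, using the degeneracy of $h_2$, to the control of covariances $E[h_2(X_{i},X_{j})\,h_2(X_{k},X_{l})]$ between pairs of evaluations. Degeneracy ($Eh_2(x,Y)=0$) plays here the role that orthogonality plays for i.i.d. data: when the index pairs $\{i,j\}$ and $\{k,l\}$ are separated by a large gap, the corresponding covariance is negligible. To make this quantitative I would prove a generalised covariance inequality in each of the three settings. In the absolutely regular case (1), Berbee's coupling together with H\"older's inequality and the uniform $(2+\delta)$-moments produces a decay of order $\beta^{\delta/(2+\delta)}(k)$ in the gap $k$. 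In the strongly mixing case (2) coupling is unavailable, so I would first use the $P$-Lipschitz or variation condition to replace $h_2(X_i,X_j)$ by a truncated, spatially localised version, and then apply a Davydov--Rio covariance inequality; the exponent $\tfrac{2\gamma\delta}{3\gamma\delta+\delta+5\gamma+2}$ is exactly what balances the truncation error governed by $E|X_1|^\gamma$, the localisation error, and the mixing decay. Case (3) would be reduced to case (1) by approximating $X_n$ by a function of the finitely many coordinates $Z_n,\dots,Z_{n+L}$ via the $1$-approximation constants, with an error controlled by $\alpha_L=\sqrt{2\sum_{i=L}^{\infty}a_i}$. In every case, once the covariance of two distant pairs is bounded by the appropriate mixing quantity, summing these bounds over all index configurations and grouping them according to the gap $k$ turns the hypothesis $\sum_{k}k(\cdots)=O(n^{\tau})$ into precisely the factor $n^{\tau}$ in \eqref{plan:moment}. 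This counting, together with the bookkeeping of the many configuration types (disjoint, overlapping, and adjacent pairs), is where the real work lies.

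With \eqref{plan:moment} in hand I would pass to its maximal version through the increment decomposition $S_n=\sum_{b=2}^{n}\xi_b$ with $\xi_b:=\sum_{a<b}h_2(X_a,X_b)$. Degeneracy and the covariance bounds above give $\xi_b$ an $L^2$-size of order $b^{(1+\tau)/2}$ and, more generally, bound $E|S_j-S_i|^{2+\delta}$ by $C\bigl(\sum_{b=i+1}^{j}b^{1+\tau}\bigr)^{(2+\delta)/2}$. Since the exponent $(2+\delta)/2$ exceeds $1$, M\'oricz's maximal inequality applies with weights $u_b\asymp b^{1+\tau}$ and yields $E\max_{m\leq n}|S_m|^{2+\delta}\leq C\,n^{(1+\tau/2)(2+\delta)}$, of the same order as \eqref{plan:moment}.

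Finally I would invoke the method of subsequences. Along the dyadic sequence $n_k=2^{k}$, Markov's inequality and the maximal bound give, for every $\varepsilon>0$,
\begin{equation*}
P\Bigl(\max_{n_k<n\leq n_{k+1}}|S_n|>\varepsilon\,n_k^{1+\tau/2}\log^{3/2}n_k\,\log\log n_k\Bigr)\leq \frac{C\,2^{(1+\tau/2)(2+\delta)}}{\varepsilon^{2+\delta}\,(\log n_k)^{3(2+\delta)/2}(\log\log n_k)^{2+\delta}}.
\end{equation*}
As $3(2+\delta)/2>1$, the right-hand side is summable in $k$, so by the Borel--Cantelli lemma these events occur only finitely often. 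Because the normalising factor $n^{1+\tau/2}\log^{3/2}n\,\log\log n$ is monotone in $n$, the bound on each block transfers to every $n$ inside it, and letting $\varepsilon\downarrow 0$ yields $S_n=o\bigl(n^{1+\tau/2}\log^{3/2}n\,\log\log n\bigr)$ a.s., which is \eqref{line16}. The logarithmic normalisation is not tight; it merely supplies the slack that makes the Borel--Cantelli series converge along a geometric subsequence.
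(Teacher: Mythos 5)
Your overall architecture (pairwise covariance inequalities in the three settings, a moment bound for $S_n=\sum_{1\leq i<j\leq n}h_2(X_i,X_j)$, a maximal inequality, dyadic blocks and Borel--Cantelli) matches the paper's, and your description of where the covariance bounds come from in cases (1)--(3) is consistent with the paper's Lemmas \ref{lem9}--\ref{lem11}. However, there is a genuine gap at the central quantitative step, your bound $E|S_n|^{2+\delta}\leq C\,n^{(1+\tau/2)(2+\delta)}$. The hypothesis only gives uniform $(2+\delta)$-moments of the kernel for \emph{some} $\delta>0$, possibly arbitrarily small, and the covariance lemmas control only expectations of products of \emph{two} kernel evaluations, $E\left[h_2(X_{i_1},X_{i_2})h_2(X_{i_3},X_{i_4})\right]$. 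You cannot "expand" $|S_n|^{2+\delta}$ for non-even, non-integer exponents into such pairwise terms, and even for $2+\delta=4$ you would need to control products of four kernel evaluations, which requires higher moment assumptions on $h_2$ and a combinatorial analysis that is not available here. The same objection applies to the increment bound $E|S_j-S_i|^{2+\delta}\leq C\bigl(\sum_{b=i+1}^{j}b^{1+\tau}\bigr)^{(2+\delta)/2}$ that you feed into M\'oricz's inequality: this is a Rosenthal/Burkholder-type statement for dependent, non-martingale increments and does not follow from the stated hypotheses.

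The paper's proof avoids this entirely by working only with second moments: Lemma \ref{lem12} gives $\sum_{i_1,i_2,i_3,i_4=1}^{n}\left|E\left[h_2(X_{i_1},X_{i_2})h_2(X_{i_3},X_{i_4})\right]\right|=O(n^{2+\tau})$, hence $E[Q_n^2]=O(n^{2+\tau})$, and the maximal inequality over a dyadic block is obtained by a chaining decomposition that costs a factor of order $l^2$ in $E\bigl[\max_{2^{l-1}\leq n<2^l}(\cdot)^2\bigr]$. This is precisely why the normalisation carries the factor $\log^{3/2}n\,\log\log n$: it is calibrated to absorb the logarithmic loss of the second-moment chaining argument, not merely "slack" for Borel--Cantelli as you suggest in your closing remark. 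To repair your proof you should either replace the $(2+\delta)$-moment bound by the second-moment bound $E[Q_n^2]=O(n^{2+\tau})$ and accept the chaining loss (which the given normalisation covers), or supply a genuinely new higher-moment inequality for $S_n$ under the stated assumptions, which is not something the pairwise covariance lemmas can deliver.
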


\noindent\textbf{Remark. }Since $\beta(k)\leq 1$, condition (1) in Theorem~1 is always satisfied
with some $\tau \in [0,2]$. In the extreme case when $\tau=2$,
the conclusion of Theorem~1 is trivial, since $U_n(h_2)\rightarrow 0$ by
the $U$-statistic ergodic theorem for absolutely regular processes,
established by Aaronson et al. \cite{aaro}. In the other extreme case 
$\tau=0$, i.e. when the series 
$\sum_{k=1}^\infty k\, \beta^{\frac{\delta}{2+\delta}} (k)$ converges,
the conclusion of Theorem~1 is close to the optimal rate which follows in
the independent case from the LIL of Dehling, Denker and Philipp \cite{deh3}.

\begin{theorem}\label{theo2} Let $\left(X_n\right)_{n\in\mathds{N}}$ be a stationary process and $h$ a centered kernel with uniform $(2+\delta)$-moments for some $\delta>0$. Let $\epsilon>0$ be such that one of the following three conditions hold:
\begin{enumerate}
 \item  $\left(X_n\right)_{n\in\mathds{N}}$ is absolutely regular and $\sum_{k=0}^{n}k\beta^{\frac{\delta}{2+\delta}}\left(k\right)=O\left(n^{1-\epsilon}\right)$.
\item $\left(X_n\right)_{n\in\mathds{N}}$ is strongly mixing, $E\left|X_1\right|^\gamma<\infty$ for a $\gamma>0$, $h_2$ satisfies the $P$-Lipschitz-continuity or the variation condition and $\sum_{k=0}^{n}k\alpha^{\frac{2\gamma\delta}{3\gamma\delta+\delta+5\gamma+2}}\left(k\right)=O\left(n^{1-\epsilon}\right)$.
\item $\left(X_n\right)_{n\in\mathds{N}}$ is a $1$-approximating functional with constants $\left(a_n\right)_{n\in\N}$ of an absolutely regular process with mixing coefficients safisfying $\beta\left(n\right)=O\left(n^{-\frac{168\delta+336}{\delta}}\right)$ and for $\alpha_L=\sqrt{2\sum_{i=L}^{\infty}a_i}$: $\sum_{k=0}^{n}k\alpha^{\frac{\delta}{2+\delta}}_k=O\left(n^{1-\epsilon}\right)$. $h_2$ satisfies the $P$-Lipschitz-continuity or the variation condition, $\left(h_1\left(X_n\right)\right)_{n\in\N}$ is $(2+\delta)$-approximating with constants $\left(b_n\right)_{n\in\N}$, such that $b_n=O\left(n^{-\frac{2\delta+7}{\delta}}\right)$.
\end{enumerate}
If additionally $\sigma_\infty^2:=\Var\left[h_1\left(X_0\right)\right]+2\sum_{i=1}^{\infty}\Cov\left[h_1\left(X_0\right),h_1\left(X_i\right)\right]>0$, then the LIL holds for $T_n=\sum_{1\leq i<j\leq n}h\left(X_i,X_j\right)$.
\end{theorem}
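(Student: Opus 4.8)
The plan is to reduce the LIL for $T_n$ to the classical LIL for the partial sums of the linear part of the Hoeffding decomposition, treating the degenerate part as a remainder that is rendered negligible by Theorem~\ref{theo1}. Since $h$ is centered we have $\theta=0$, and multiplying the Hoeffding decomposition by $\binom{n}{2}$ gives
\[
 T_n = (n-1)\sum_{i=1}^n h_1(X_i) + \sum_{1\leq i<j\leq n} h_2(X_i,X_j) =: (n-1)\,S_n + R_n,
\]
where $S_n=\sum_{i=1}^n h_1(X_i)$ is a partial sum of the stationary centered sequence $(h_1(X_i))_{i\geq 1}$ and $R_n=\binom{n}{2}U_n(h_2)$ is the degenerate $U$-statistic. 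First I would record that the uniform $(2+\delta)$-moment hypothesis and the $P$-Lipschitz or variation condition on $h$ pass to $h_1$ and $h_2$, so that Theorem~\ref{theo1} applies to $h_2$ with $\tau=1-\epsilon$ under each of the three sets of conditions (the $O(n^{1-\epsilon})$ series bounds in Theorem~\ref{theo2} are precisely the $\tau=1-\epsilon$ instances of those in Theorem~\ref{theo1}).

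Next I would establish the variance asymptotics. Since $\sigma_\infty^2=\Var[h_1(X_0)]+2\sum_{i=1}^\infty\Cov[h_1(X_0),h_1(X_i)]>0$, the weak dependence gives $\Var(S_n)/n\to\sigma_\infty^2$, so $(n-1)^2\Var(S_n)\sim\sigma_\infty^2 n^3$. The moment estimates used in the proof of Theorem~\ref{theo1} yield a second moment bound for $R_n$ of order $n^{2+\tau}=n^{3-\epsilon}$ up to logarithmic factors, so $\Var(R_n)=o(n^3)$, and by Cauchy--Schwarz the cross term $(n-1)\Cov(S_n,R_n)$ is of order at most $n^{3-\epsilon/2}=o(n^3)$. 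Hence $\Var(T_n)=\sigma_\infty^2 n^3(1+o(1))$, and since $\log\log\Var(T_n)\sim\log\log n$, the LIL normalizer simplifies to $\sqrt{2\,\Var(T_n)\log\log\Var(T_n)}=\sigma_\infty\,n^{3/2}\sqrt{2\log\log n}\,(1+o(1))$.

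The remainder is then disposed of by Theorem~\ref{theo1}: with $\tau=1-\epsilon$ it gives $U_n(h_2)=o\bigl(n^{-(1/2+\epsilon/2)}\log^{3/2}n\,\log\log n\bigr)$ a.s., hence $R_n=o\bigl(n^{3/2-\epsilon/2}\log^{3/2}n\,\log\log n\bigr)$ a.s.; dividing by the normalizer, the factor $n^{-\epsilon/2}$ beats every power of $\log n$, so $R_n/\sqrt{2\,\Var(T_n)\log\log\Var(T_n)}\to 0$ a.s. It remains to prove the LIL for the linear term, which, after dividing by the normalizer and using $(n-1)/n\to 1$, reduces to
\[
 \limsup_{n\to\infty}\frac{S_n}{\sqrt{2\,\Var(S_n)\log\log\Var(S_n)}}=1
 \quad\text{and}\quad
 \liminf_{n\to\infty}\frac{S_n}{\sqrt{2\,\Var(S_n)\log\log\Var(S_n)}}=-1
\]
a.s. This is the classical LIL for partial sums of a weakly dependent, centered, nondegenerate sequence, which I would invoke case by case: the $\beta$-mixing LIL in case (1), the strong-mixing LIL of Rio in case (2), and the LIL for functionals of absolutely regular processes, obtained through an almost sure invariance principle, in case (3). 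The main obstacle is precisely case (3): the extra hypotheses there—the decay rate of $\beta(n)$ and the $(2+\delta)$-approximation of $(h_1(X_n))$ with rate $b_n$—are exactly what is needed to make such an invariance principle applicable to $S_n$, and verifying these hypotheses and citing the correct approximation result is where the real work lies, the remaining steps being the bookkeeping above. Combining the almost sure vanishing of the remainder with the LIL for $S_n$ by a Slutsky-type argument on the $\limsup$ and $\liminf$ then yields the claim.
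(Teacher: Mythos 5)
Your proposal is correct and follows essentially the same route as the paper: Hoeffding decomposition, variance asymptotics $\Var(T_n)\sim\sigma_\infty^2 n^3$ from $\sigma_\infty^2>0$ and Lemma \ref{lem12}, Theorem \ref{theo1} with $\tau=1-\epsilon$ to kill the degenerate remainder against the normalizer, and the partial-sum LIL (Rio under conditions (1) and (2) via $\alpha(k)\le\beta(k)$, Philipp--Stout's invariance principle under condition (3)) for the linear term. The only cosmetic difference is that you spell out the cross-term estimate via Cauchy--Schwarz, which the paper absorbs into the statement that the variance ratio tends to one.
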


\section{An application to robust estimation}

The classical approach to estimate the location of a sequence of random variables $\left(X_n\right)_{n\in\N}$ is based on the sample mean $\bar{X}=\frac{1}{n}\sum_{i=1}^{n}X_i$, but this estimator is not robust in the sense that a single extreme value can have a big influence on $\bar{X}$. The median of $X_1,\ldots,X_n$ is robust to outliers, but has a low efficiency if the $X_n$ are standard normal. As a compromise, one can use a trimmed mean or the Hodges-Lehmann estimator
\begin{equation*}
 H_n=\operatorname{median}\left\{\frac{X_i+X_j}{2}\big|  1\leq i<j\leq n\right\}.
\end{equation*}
The Hodges-Lehmann estimator can be expressed with the generalized inverse of the empirical $U$-distribution function
\begin{equation*}
H_n=U^{-1}_n\left(\frac{1}{2}\right):=\inf \left\{t\in\R \big|U_n\left(t\right)\geq \frac{1}{2} \right\},
\end{equation*}
with
\begin{equation*}
 U_n\left(t\right):=\frac{2}{n(n-1)}\sum_{1\leq i<j\leq n}\mathds{1}_{\left\{\frac{1}{2}\left(X_i+X_j\right)\leq t\right\}}.
\end{equation*}
Let $U\left(t\right)=E\left[\mathds{1}_{\left\{\frac{1}{2}\left(X+Y\right)\leq t\right\}}\right]$, where $X$ and $Y$ are independent. If $U\left(t\right)$ is strictly increasing and continuous, we can without loss of generality assume that $U\left(t\right)=t$ for $t\in\left[0,1\right]$. For functionals of absolutely regular processes, Borovkova, Burton and Dehling \cite{boro} have proved the convergence of the emperical $U$-process
\begin{equation*}
 \left(\sqrt{n}\left(U_n\left(t\right)-U\left(t\right)\right)\right)_{t\in\R}
\end{equation*}
to a Gaussian process. By Theorem 1 of Vervaat \cite{verv}, the same holds for the inverse process $(\sqrt{n}(U_n^{-1}(t)-EU_n^{-1}(t)))_t$, so $H_n$ is asymptotically normal. Our aim is to prove the LIL for $H_n$. First note that $H_n$ is smaller than $t_0=U^{-1}\left(\frac{1}{2}\right)$, iff $U_n\left(t_0\right)$ is bigger than $\frac{1}{2}$. This converse behavior motivates a generalized Bahadur representation
\begin{equation}\label{line3}
 H_n=t_0-\frac{U_n\left(t_0\right)-U\left(t_0\right)}{U'\left(t_0\right)}+R_n,
\end{equation}
where we need to assume that $U'\left(t_0\right)>0$ (so $U\left(t\right)$ is invertible in a neighborhood and $U\left(t_0\right)=\frac{1}{2}$). The following short calculation shows that the remainder $R_n$ is related to the inverse of the empirical $U$-process centered in $\left(t_0,U_n\left(t_o\right)\right)$. We define:
\begin{equation*}
Z_n\left(x\right):=\left(U_n\left(\cdot+t_0\right)-U_n\left(t_0\right)\right)^{-1}\left(x\right)-\frac{x}{U'\left(t_0\right)},
\end{equation*}
and observe that
\begin{align*}
 Z_n\left(x\right)&=\inf\left\{s\big|U_n\left(s+t_0\right)-U_n\left(t_0\right)\leq x\right\}-\frac{x}{U'\left(t_0\right)}\\
&=\inf\left\{s\big|U_n\left(s\right)\leq x+U_n\left(t_0\right)\right\}-\frac{x}{U'\left(t_0\right)}-t_0=U_n^{-1}\left(x+U_n\left(t_0\right)\right)-\frac{x}{U'\left(t_0\right)}-t_0.
\end{align*}
Thus we finally get
\begin{equation*}
Z_n\left(U\left(t_0\right)-U_n\left(t_0\right)\right)=H_n-t_0+\frac{U_n\left(t_0\right)-U\left(t_0\right)}{U'\left(t_0\right)}=R_n.
\end{equation*}
By Theorem \ref{theo2}, $U\left(t_0\right)-U_n\left(t_0\right)=O\left(\sqrt{\frac{\log\log n}{n}}\right)$ a.s., so if we can show that for any constant $C$
\begin{equation}\label{line4}
 \sup_{\left|t\right|\leq C\sqrt{\frac{\log\log n}{n}}}\left(U_n\left(t_0+t\right)-U_n\left(t_0\right)-U\left(t_0+t\right)+U\left(t_0\right)\right)=o\left(\sqrt{\frac{\log\log n}{n}}\right)\ \ \text{a.s.}
\end{equation}
then by Theorem 4 of Vervaat \cite{verv}
\begin{equation*}
 \sup_{\left|t\right|\leq C\sqrt{\frac{\log\log n}{n}}}Z_n\left(t\right)=o\left(\sqrt{\frac{\log\log n}{n}}\right)\ \ \text{a.s.}
\end{equation*}
and hence
\begin{equation*}
R_n=Z_n\left(U\left(t_0\right)-U_n\left(t_0\right)\right)=o\left(\sqrt{\frac{\log\log n}{n}}\right)\ \ \text{a.s.}
\end{equation*}
The LIL for $H_n$ follows then easily from the Bahadur representation (\ref{line3}) and the LIL for $U_n\left(t_0\right)$. We will only sketch the proof of (\ref{line4}). $U_n\left(t\right)$ and $U\left(t\right)$ are nondecreasing, so for $t_1<t<t_2$:
\begin{align*}
 \left|U_n\left(t\right)-U\left(t\right)\right|&\leq \max\left\{\left|U_n\left(t_1\right)-U\left(t\right)\right|,\left|U_n\left(t_2\right)-U\left(t\right)\right|\right\}\\
&\leq\max\left\{\left|U_n\left(t_1\right)-U\left(t_1\right)\right|,\left|U_n\left(t_2\right)-U\left(t_2\right)\right|\right\}+\left(U\left(t_2\right)-U\left(t_1\right)\right)
\end{align*}
Furthermore,  $U\left(t\right)$ is differentiable in $t_0$, so $\left(U\left(t_2\right)-U\left(t_1\right)\right)=O\left(t_2-t_1\right)$ as $t_1,t_2\rightarrow t_0$ and for every $\epsilon >0$ we can find a $K$ such that 
\begin{equation*}
 \sup_{\left|t\right|\leq C\sqrt{\frac{\log\log n}{n}}}\sqrt{\frac{n}{\log\log n}}Z_n\left(t\right)
\leq \max_{|k|\leq K}\sqrt{\frac{n}{\log\log n}}Z_n\left(\frac{kC}{K}\sqrt{\frac{\log\log n}{n}}\right)+\epsilon.
\end{equation*}
$Z_n\left(\frac{kC}{K}\sqrt{\frac{\log\log n}{n}}\right)$ is a $U$-statistic with kernel $\mathds{1}_{\left\{\frac{1}{2}\left(X_i+X_j\right)\in\left(t_0,t_0+\frac{kC}{K}\sqrt{\frac{\log\log n}{n}}\right]\right\}}$, which has decaying moments. Similar to Theorem \ref{theo2}, one can show that $Z_n\left(\frac{kC}{K}\sqrt{\frac{\log\log n}{n}}\right)=o\left(\sqrt{\frac{\log\log n}{n}}\right)$ a.s. if the mixing assumption (3) of Theorem \ref{theo2} holds.

\section{Preliminary results}

To control the moments of degenerate $U$-statistics, we need bounds for the covariance. In the following three lemmas, let $m=\max\left\{i_{\left(2\right)}-i_{\left(1\right)},i_{\left(4\right)}-i_{\left(3\right)}\right\}$, where $\left\{i_1,i_2,i_3,i_4\right\}=\left\{i_{(1)},i_{(2)}, i_{(3)},i_{(4)}\right\}$ and $i_{(1)}\leq i_{(2)}\leq i_{(3)}\leq i_{(4)}$:

\begin{lemma}[Yoshihara \cite{yosh}]\label{lem9} Let $h_2$ be a centered, degenerate kernel with uniform $(2+\delta)$-moments for a $\delta>0$.
If $\left(X_{n}\right)_{n\in\mathds{N}}$ is absolutely regular, then there is a constant $C$ such that
\begin{equation*}
\left|E\left[h_{2}\left(X_{i_{1}},X_{i_{2}}\right)h_{2}\left(X_{i_{3}},X_{i_{4}}\right)\right]\right|\leq C\beta^{\frac{\delta}{2+\delta}}\left(m\right).
\end{equation*}
\end{lemma}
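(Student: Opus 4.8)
The plan is to reduce the bound to a single decoupling estimate, using the coupling characterization of absolute regularity together with the degeneracy of $h_2$. Write $g:=h_2(X_{i_1},X_{i_2})\,h_2(X_{i_3},X_{i_4})$ and regard it as a function of the four variables $X_{i_{(1)}},\dots,X_{i_{(4)}}$; the two kernel factors partition the sorted indices $\{i_{(1)},i_{(2)},i_{(3)},i_{(4)}\}$ into two pairs, and up to relabelling there are three such partitions (the two small indices paired together, or interleaved, or nested). The key observation is that the larger of the two \emph{edge} gaps, $m=\max\{i_{(2)}-i_{(1)},\,i_{(4)}-i_{(3)}\}$, always lets us split off one of the two extreme indices $i_{(1)},i_{(4)}$ from the remaining three, and that after this split degeneracy forces the expectation to vanish.

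First I would carry out the splitting. Assume without loss of generality that $i_{(4)}-i_{(3)}=m$ (the case $i_{(2)}-i_{(1)}=m$ is symmetric, isolating $i_{(1)}$ instead). By Berbee's coupling lemma for absolutely regular sequences there is a random variable $\tilde X$ with the distribution of $X_{i_{(4)}}$, independent of $\sigma(X_{i_{(1)}},X_{i_{(2)}},X_{i_{(3)}})$, with $P(\tilde X\neq X_{i_{(4)}})\le\beta(m)$. Now $i_{(4)}$ lies in exactly one kernel factor, paired with some $i_{(a)}$, $a\in\{1,2,3\}$, because an extreme index is never paired with itself; this is what makes the argument uniform over the three partitions. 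Let $g_{0}$ denote $g$ with $X_{i_{(4)}}$ replaced by $\tilde X$. The other kernel factor depends only on indices in $\{i_{(1)},i_{(2)},i_{(3)}\}$, so conditioning on $X_{i_{(1)}},X_{i_{(2)}},X_{i_{(3)}}$ and using independence of $\tilde X$, the factor $h_2(X_{i_{(a)}},\tilde X)$ contributes $\int h_2(X_{i_{(a)}},y)\,dF(y)=0$ by degeneracy, whence $E[g_{0}]=0$.

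Next I would control $|E[g]|=|E[g]-E[g_{0}]|$. Since $g$ and $g_{0}$ agree on $\{\tilde X=X_{i_{(4)}}\}$, Hölder's inequality with exponents $1+\tfrac{\delta}{2}$ and its conjugate gives
\[
|E[g]-E[g_{0}]|\le \|g-g_{0}\|_{1+\delta/2}\;P(\tilde X\neq X_{i_{(4)}})^{\frac{\delta}{2+\delta}}\le 2\,\|g\|_{1+\delta/2}\,\beta^{\frac{\delta}{2+\delta}}(m),
\]
the conjugate exponent producing exactly the power $\frac{\delta}{2+\delta}$. Finally Cauchy--Schwarz and the choice $\delta'=\delta/2$ close the moment bookkeeping: $E|g|^{1+\delta/2}\le(E|h_2(X_{i_1},X_{i_2})|^{2+\delta})^{1/2}(E|h_2(X_{i_3},X_{i_4})|^{2+\delta})^{1/2}\le M$ by the uniform $(2+\delta)$-moment hypothesis, so $\|g\|_{1+\delta/2}$ is bounded by an absolute constant and the lemma follows with $C=2M^{1/(1+\delta/2)}$.

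The hard part is really the bookkeeping rather than any single step: the exponent $\frac{\delta}{2+\delta}$ is forced, since Cauchy--Schwarz demands $2(1+\delta')=2+\delta$ to match the available moments. The one genuinely structural point to get right is that one must cut at an edge gap of size $m$ and \emph{not} at the (possibly larger) middle gap $i_{(3)}-i_{(2)}$: cutting in the middle would, in the partition where the two small indices are paired together, leave the non-vanishing product $E[h_2(X_{i_1},X_{i_2})]\,E[h_2(X_{i_3},X_{i_4})]$, whereas splitting off a single extreme variable always activates degeneracy. Since $\beta$ is non-increasing and we cut at gap exactly $m$, no loss is incurred even when the middle gap dominates.
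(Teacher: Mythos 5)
Your argument is correct, and it is worth noting that the paper itself gives no proof of this lemma: it is imported verbatim from Yoshihara, whose original argument runs through a truncation-based decoupling inequality for absolutely regular sequences (truncate $h_2$ at level $\sqrt{K}$, bound the bounded part by $K\beta(m)$ via the total-variation characterization of $\beta$, bound the truncation error by $MK^{-\delta/2}$ using the $(1+\delta/2)$-moments, then optimize over $K$) --- exactly the template the authors reproduce in their own proof of Lemma \ref{lem11}, where the truncation is genuinely needed because the coupling there is only approximate. Your route via Berbee's maximal coupling plus H\"older on the disagreement event $\{\tilde X\neq X_{i_{(4)}}\}$ reaches the same exponent $\frac{\delta}{2+\delta}$ with no truncation at all, which is cleaner; and your two structural observations (one must cut at an edge gap, never the middle gap, and the isolated extreme index always sits in exactly one kernel factor, so degeneracy annihilates $E[g_0]$) are precisely the points that make the lemma true with $m=\max\{i_{(2)}-i_{(1)},\,i_{(4)}-i_{(3)}\}$ rather than the maximal gap. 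One small piece of bookkeeping is stated too loosely: $\|g-g_0\|_{1+\delta/2}\leq\|g\|_{1+\delta/2}+\|g_0\|_{1+\delta/2}$, not $2\|g\|_{1+\delta/2}$, and bounding $\|g_0\|_{1+\delta/2}$ by $M^{2/(2+\delta)}$ requires the \emph{product-measure} half of the uniform-moment hypothesis, $\iint|h_2|^{2+\delta}\,dF\,dF\leq M$, since after coupling one factor of $g_0$ is evaluated at an independent pair; this is in fact the reason the definition of uniform $(2+\delta)$-moments carries both conditions, so you should make that use explicit. With that repair the constant $C=2M^{2/(2+\delta)}$ stands and the proof is complete.
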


\begin{lemma}\label{lem10} Let $h_2$ be a centered, degenerate kernel that satisfies the $P$-Lipschitz-continuity or the variation condition and has uniform $(2+\delta)$-moments for a $\delta>0$, $\left(X_{n}\right)_{n\in\mathds{N}}$ a stationary sequence of random variables. If there is a $\gamma>0$ with $E\left|X_{k}\right|^{\gamma}<\infty$, then there exists a constant $C$, such that the following inequality holds:
\begin{equation*}
\left|E\left[h_{2}\left(X_{i_{1}},X_{i_{2}}\right)h_{2}\left(X_{i_{3}},X_{i_{4}}\right)\right]\right|\leq C\alpha^{\frac{2\gamma\delta}{3\gamma\delta+\delta+5\gamma+2}}\left(m\right)
\end{equation*}
\end{lemma}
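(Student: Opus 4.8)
The plan is to follow the architecture of Yoshihara's Lemma~\ref{lem9}, replacing absolute regularity by strong mixing; the cost of the weaker hypothesis is that the coupling step available for $\beta$-mixing is no longer at our disposal and must be replaced by a truncation together with a covariance inequality of Ibragimov--Davydov type. By the symmetry of the left-hand side in the two index pairs and the symmetry of the definition of $m$, I would first reduce to the case $m=i_{(2)}-i_{(1)}$, the case $m=i_{(4)}-i_{(3)}$ being identical after reversing the order of the indices. With the indices ordered as $i_{(1)}\le i_{(2)}\le i_{(3)}\le i_{(4)}$, the smallest index $i_{(1)}$ belongs to exactly one of the two pairs and is separated from the remaining three indices by the gap $m$, so that $X_{i_{(1)}}$ is $\mathcal F^{i_{(1)}}_{-\infty}$-measurable whereas $X_{i_{(2)}},X_{i_{(3)}},X_{i_{(4)}}$ are $\mathcal F^{\infty}_{i_{(2)}}$-measurable. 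In the independent case the assertion is immediate: conditioning on the three larger-indexed variables, the factor containing $X_{i_{(1)}}$ integrates to $\int h_2(z,\cdot)\,dF(z)=0$ by degeneracy, so the whole expectation vanishes. The task is to make this cancellation quantitative under mixing.

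The difficulty is that $X_{i_{(1)}}$ never appears on its own: its kernel factor is $h_2(X_{i_{(1)}},X_j)$, whose partner $X_j$ with $j\in\{i_{(2)},i_{(3)},i_{(4)}\}$ lies on the far side of the gap, so this factor is not $\mathcal F^{i_{(1)}}_{-\infty}$-measurable and no covariance inequality applies to it directly. To decouple the two arguments I would introduce three truncations. First, since $E|X_1|^\gamma<\infty$, the event $\{|X_j|>N\}$ has probability $O(N^{-\gamma})$ and, by H\"older together with the uniform $(2+\delta)$-moments, contributes at most a negative power of $N$. Second, I would clip the kernel at a level $K$; the clipped kernel is no longer exactly degenerate, but the product-of-expectations term it produces is bounded by the clipping error $E[|h_2(X_{i_{(1)}},\cdot)|\,\mathds{1}_{\{|h_2|>K\}}]$, again a negative power of $K$. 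Third, on $\{|X_j|\le N\}$ I would partition $[-N,N]$ into cells $A_k$ of width $\epsilon$ with centres $c_k$ and replace $X_j$ by $c_k$ on $A_k$; the replacement error is controlled precisely by the $P$-Lipschitz or the variation condition, which is exactly where that hypothesis is needed. After these reductions the $k$-th summand splits as the $\mathcal F^{i_{(1)}}_{-\infty}$-measurable factor $h_2(X_{i_{(1)}},c_k)$ times the $\mathcal F^{\infty}_{i_{(2)}}$-measurable factor $\mathds{1}_{\{X_j\in A_k\}}$ multiplied by the remaining kernel factor; the product-of-expectations part is annihilated (up to the clipping error) by degeneracy, and the remaining covariance is estimated by the Ibragimov--Davydov inequality.

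Summing over the $O(N/\epsilon)$ cells and collecting the contributions leaves three competing error mechanisms: the tail of $X_j$ beyond $N$, the clipping of the kernel at $K$, and the spatial discretisation at scale $\epsilon$, whose cost is amplified both by the cell count and by the mixing factor from the covariance inequality. Choosing $N$, $K$ and $\epsilon$ as appropriate powers of $\alpha(m)$ to balance these mechanisms produces a single power $\alpha^{\theta}(m)$ with $\theta=\tfrac{2\gamma\delta}{3\gamma\delta+\delta+5\gamma+2}$. That this is the correct exponent is consistent with the decomposition
\[
\frac{3\gamma\delta+\delta+5\gamma+2}{2\gamma\delta}=\frac32+\frac{1}{2\gamma}+\frac{5}{2\delta}+\frac{1}{\gamma\delta},
\]
whose summands separate the purely geometric cost of the discretisation (the constant term) from the tail cost in $\gamma$ and the moment cost in $\delta$.

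The step I expect to be the genuine obstacle is the reconciliation of the continuity condition with the moment requirements of the covariance inequality. The $P$-Lipschitz and variation conditions control only an $L^1$-type modulus of continuity of $h_2$, whereas both the tail estimate and the covariance inequality demand control in $L^{2+\delta}$; the clipping level $K$ is the hinge that interpolates between the two, and it must be coordinated simultaneously with the truncation level $N$ and the mesh $\epsilon$ so that none of the three error mechanisms dominates. It is this three-way interaction---rather than any single inequality---that pins down the precise exponent, and obtaining it in sharp form is the delicate part of the argument. The $P$-Lipschitz and variation conditions are treated in parallel throughout, the only difference being that under the variation condition the discretisation error is measured by the supremum of $h_2$ over an $\epsilon$-ball rather than by the replacement of a single argument.
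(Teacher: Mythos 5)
The paper does not actually prove this lemma: it is imported from Dehling and Wendler \cite{dehl} for the $P$-Lipschitz case, with the variation case declared ``very similar and hence omitted,'' so the only thing to compare your attempt against is the architecture of that cited proof. Your architecture matches it: isolate the extremal index across the gap $m$, use degeneracy of $h_2$ to kill the decoupled expectation, and make this quantitative by truncating the partner variable at level $N$ (paid for by $E|X_1|^\gamma<\infty$), clipping the kernel at level $K$ (paid for by the uniform $(2+\delta)$-moments), discretizing the truncated range into $O(N/\epsilon)$ cells so that the two sides of the gap become genuinely measurable with respect to the past and future $\sigma$-fields (paid for by the $P$-Lipschitz or variation condition), and applying the covariance inequality for bounded strongly mixing variables cell by cell. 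Each error mechanism is attributed to exactly the right hypothesis.

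The gap is the step you assert rather than perform: that balancing the errors yields precisely $\theta=\frac{2\gamma\delta}{3\gamma\delta+\delta+5\gamma+2}$. This is not a bookkeeping formality, because the exponent is highly sensitive to the exact powers of $K$, $N$ and $\epsilon$ in each term (whether the clipped kernel is bounded by $K$ or $\sqrt K$, whether the covariance term carries the full cell count $N/\epsilon$, whether the tail is estimated by H\"older against the $(2+\delta)$-moments or crudely by $K\cdot P(|X_j|>N)$, and so on). With one natural set of choices the four terms $KN\epsilon^{-1}\alpha(m)$, $\sqrt K L\epsilon$, $KN^{-\gamma}$ and $MK^{-\delta/2}$ balance to $\alpha(m)$ raised to the power $\frac{\gamma\delta}{2\gamma\delta+3\gamma+\delta+2}$, which is strictly smaller than the claimed exponent; and your own consistency check, the identity $\frac{1}{\theta}=\frac32+\frac{1}{2\gamma}+\frac{5}{2\delta}+\frac{1}{\gamma\delta}$, does not fall out of the three mechanisms as you have set them up (none of them produces the constant $\frac32$ or the coefficient $\frac52$ of $\frac1\delta$). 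So as written your argument establishes $\left|E\left[h_2(X_{i_1},X_{i_2})h_2(X_{i_3},X_{i_4})\right]\right|\leq C\alpha^{\theta'}(m)$ for \emph{some} $\theta'>0$ --- which is structurally all that Theorems \ref{theo1} and \ref{theo2} need, since the exponent only reappears inside their summability hypotheses --- but it does not prove the lemma with the stated exponent. To close the gap you must either reproduce the precise estimates of \cite{dehl} or carry out the optimization explicitly with your own estimates and accept the exponent that results.
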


This lemma is due to Dehling, Wendler \cite{dehl} for $P$-Lipschitz-continuous kernels. The proof under the variation condition is very similar and hence omitted.

\begin{lemma}\label{lem11}Let $h_2$ be a centered, degenerate kernel that satisfies the $P$-Lipschitz-continuity or the variation condition and has uniform $(2+\delta)$-moments for a $\delta>0$, and $\left(X_{n}\right)_{n\in\mathds{N}}$ a $1$-approximating functional of an absolutely regular process with constants $a_{l}$. Define $\alpha_L$ as $\alpha_L=\sqrt{2\sum_{i=L}^{\infty}a_i}$ and $\beta\left(j\right)$ as the mixing coefficient of $\left(Z_n\right)$. Then:
\begin{equation*}
\left|E\left[h_{2}\left(X_{i_{1}},X_{i_{2}}\right)h_{2}\left(X_{i_{3}},X_{i_{4}}\right)\right]\right|\leq C\beta^{\frac{\delta}{2+\delta}}\left(\lfloor\frac{m}{3}\rfloor\right)+C\alpha_{\lfloor\frac{m}{3}\rfloor}^{\frac{\delta}{2+\delta}}
\end{equation*}
\end{lemma}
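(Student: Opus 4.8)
The plan is to use the large gap of length $m$ to isolate one of the four observations from the other three, and then to combine three ingredients: approximation of the functionals by finite blocks of the underlying process $(Z_n)$, the absolute regularity of $(Z_n)$, and the degeneracy of $h_2$. Writing the ordered indices as $i_{(1)}\le i_{(2)}\le i_{(3)}\le i_{(4)}$, we have $m=\max\{i_{(2)}-i_{(1)},\,i_{(4)}-i_{(3)}\}$, so either the smallest or the largest observation is separated from the remaining three by a gap of length $m$. By symmetry I would treat the case $m=i_{(2)}-i_{(1)}$ and isolate $X_{i_{(1)}}$; the case $m=i_{(4)}-i_{(3)}$ is analogous, with the roles of past and future interchanged. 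Throughout I set $L=\lfloor m/3\rfloor$.

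First I would replace each observation by its block approximation $X_{i_j}^{(L)}:=E\bigl(X_{i_j}\mid Z_{i_j},\dots,Z_{i_j+L}\bigr)$, which by the $1$-approximating condition and stationarity satisfies $\|X_{i_j}-X_{i_j}^{(L)}\|_1\le a_L$. Telescoping the product $h_2(X_{i_1},X_{i_2})\,h_2(X_{i_3},X_{i_4})$ one argument at a time and applying the $P$-Lipschitz or variation condition on the set where the coupling displacement is small, together with the uniform $(2+\delta)$-moment bound and Markov's inequality on its complement, I would bound the total replacement error. The natural scale for the displacement is the $L^2$ coupling distance, which by the martingale orthogonality of the increments $E(X\mid Z_0,\dots,Z_l)-E(X\mid Z_0,\dots,Z_{l-1})$ is of order $\alpha_L=\sqrt{2\sum_{i\ge L}a_i}$; choosing the truncation level accordingly produces an error of order $\alpha_L^{\delta/(2+\delta)}$. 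It therefore remains to bound $E\bigl[h_2(X_{i_1}^{(L)},X_{i_2}^{(L)})\,h_2(X_{i_3}^{(L)},X_{i_4}^{(L)})\bigr]$.

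Now $X_{i_{(1)}}^{(L)}$ is measurable with respect to $\sigma(Z_{i_{(1)}},\dots,Z_{i_{(1)}+L})$, while the three remaining block approximations are measurable with respect to $\sigma(Z_{i_{(2)}},Z_{i_{(2)}+1},\dots)$; since $i_{(2)}-(i_{(1)}+L)\ge m-L\ge\lfloor m/3\rfloor$, these two $\sigma$-fields are separated by at least $\lfloor m/3\rfloor$. Using absolute regularity of $(Z_n)$ I would couple $X_{i_{(1)}}^{(L)}$ with an independent copy so that, up to total-variation error $\beta(\lfloor m/3\rfloor)$, the joint law of the isolated block and the other three factorizes. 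Since the product of the two kernels has $(1+\delta/2)$ moments bounded by $M$ (Cauchy--Schwarz from the uniform $(2+\delta)$-moments), the standard covariance estimate for absolutely regular sequences converts this total-variation error into a contribution of order $\beta^{\delta/(2+\delta)}(\lfloor m/3\rfloor)$, exactly as in Lemma~\ref{lem9}. In the factorized expectation the isolated variable is integrated against its own marginal $P_{X^{(L)}}$ independently of the others, so the inner integral over the argument carrying $X_{i_{(1)}}^{(L)}$ equals $\int h_2(x,y)\,dP_{X^{(L)}}(x)$. Were $P_{X^{(L)}}$ the true marginal $F$, degeneracy of $h_2$ would make this vanish; in general it equals $\int h_2(x,y)\,d(P_{X^{(L)}}-F)(x)$, which by the same continuity-plus-moment estimate used above is again of order $\alpha_L^{\delta/(2+\delta)}$. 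Collecting the replacement error, the coupling error and this approximate-degeneracy error yields the asserted bound $C\beta^{\delta/(2+\delta)}(\lfloor m/3\rfloor)+C\alpha_{\lfloor m/3\rfloor}^{\delta/(2+\delta)}$.

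The crux, and the reason the argument is more delicate than Lemma~\ref{lem9}, is the interplay between the one-sided dependence and the degeneracy. Because $X_n$ is a functional of the entire future $(Z_{n+k})_{k\ge0}$, the observations overlap in their dependence on $(Z_n)$ and cannot be separated until they have been truncated to finite blocks; but truncation perturbs the marginal law, so the degeneracy that should annihilate the leading term holds only approximately. The main obstacle is thus the bookkeeping that keeps the two approximation errors of the same order $\alpha_L^{\delta/(2+\delta)}$ as the mixing error: one must verify that the displacement really lives at the $L^2$ scale $\alpha_L$, that the truncation threshold can be chosen so that the continuity contributions carry the exponent $\delta/(2+\delta)$, and that the defect in degeneracy is controlled by the same quantity. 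I would expect this to be where the technical work concentrates.
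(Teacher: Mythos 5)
Your overall architecture (couple to create independence, exploit degeneracy, telescope one argument at a time, truncate $h_2$ at a level $\sqrt{K}$, and balance the continuity term against the moment term) coincides with the paper's, and your explanation of the factor $3$ in $\lfloor\frac{m}{3}\rfloor$ as block length plus separation is correct. The decisive difference lies in how the coupling is built, and that is where your argument has a genuine gap. The paper invokes Corollary 2.17 of Borovkova, Burton and Dehling, which produces sequences $\left(X'_n\right)$ and $\left(X''_n\right)$ that are \emph{exact distributional copies} of $\left(X_n\right)$, with $\left(X''_n\right)$ independent of $\left(X_n\right)$, $X''_{i_1}$ close to $X'_{i_1}$, and $X'_i$ close to $X_i$ for $i\geq m$, each up to events of probability $O\left(\alpha_{\lfloor\frac{m}{3}\rfloor}+\beta\left(\lfloor\frac{m}{3}\rfloor\right)\right)$. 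Because $X''_{i_1}$ has exactly the marginal law $F$ and is exactly independent of $\left(X_{i_2},X_{i_3},X_{i_4}\right)$, degeneracy kills $E\left[h_2\left(X''_{i_1},X_{i_2}\right)h_2\left(X_{i_3},X_{i_4}\right)\right]$ \emph{exactly}, and every pair occurring in the telescoping has one of the joint laws $\mathcal{P}_{X_1,X_k}$ or $\mathcal{P}_{X_1}\times\mathcal{P}_{X_1}$ for which the $P$-Lipschitz/variation condition and the uniform $(2+\delta)$-moments are actually hypothesized.

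Your blocking $X^{(L)}_{i_j}=E\left(X_{i_j}\mid Z_{i_j},\dots,Z_{i_j+L}\right)$ destroys both features. Since $P_{X^{(L)}}\neq F$, the degeneracy defect $\int h_2(x,y)\,d\left(P_{X^{(L)}}-F\right)(x)$ appears, and you propose to control it ``by the same continuity-plus-moment estimate'' --- but the $P$-Lipschitz and variation conditions are assumed only for arguments with joint law $\mathcal{P}_{X_1,X_k}$ or $\mathcal{P}_{X_1}\times\mathcal{P}_{X_1}$, and the uniform $(2+\delta)$-moments likewise only under those laws; neither hypothesis controls $h_2$ at the perturbed variables $X^{(L)}$, whose law may put mass where $h_2$ behaves badly. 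The same objection hits your decoupling step, where the Yoshihara-type covariance inequality (as in Lemma \ref{lem9}) needs a $(1+\frac{\delta}{2})$-moment bound for the kernel evaluated at the \emph{blocked} variables. (Your appeal to martingale orthogonality to place the displacement at the $L^2$ scale $\alpha_L$ is also unsupported: the hypothesis is $1$-approximation, so only $L^1$ control is available; $\alpha_L=\sqrt{2\sum_{i\geq L}a_i}$ arises in Borovkova et al.\ from a Markov inequality applied to summed $L^1$ errors.) These issues are not cosmetic: repairing them amounts to re-deriving an exact-copy coupling, which is precisely what Corollary 2.17 supplies. Replacing your conditional-expectation blocks by that coupling makes the degeneracy exact, removes the defect term entirely, and reduces the proof to the four replacement errors, which are then bounded exactly as you describe.
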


\begin{proof} First, let $h_2$ be $P$-Lipschitz-continuous. For simplicity, we consider only the case $O=i_1<i_2<i_3<i_4$ and $m=i_2-i_1\geq i_4-i_3$. With Corollary 2.17 of Borovkova et. al. \cite{boro}, there exist sequences $\left(X'_n\right)_{n\in\Z}$ and $\left(X''_n\right)_{n\in\Z}$ with the same distribution as $\left(X_n\right)_{n\in\Z}$, such that
\begin{enumerate}
 \item $\left(X''_n\right)_{n\in\Z}$ is independent of $\left(X_n\right)_{n\in\Z}$,
\item $P\left[\sum_{i=m}^{\infty}\left|X_i-X_i'\right|> \alpha_{\lfloor\frac{m}{3}\rfloor}\right]\leq \alpha_{\lfloor\frac{m}{3}\rfloor}+\beta\left(\lfloor\frac{m}{3}\rfloor\right)$,
\item $P\left[\sum_{i=0}^{\infty}\left|X'_{-i}-X''_{-i}\right|> \alpha_{\lfloor\frac{m}{3}\rfloor}\right]\leq \alpha_{\lfloor\frac{m}{3}\rfloor}$.
\end{enumerate}
As $h_2$ is degenerated and $X''_{i_{1}}$ and $\left(X_{i_{2}},X_{i_{3}},X_{i_{4}}\right)$ are independent, we have that
\begin{equation*}
E\left[h_{2}\left(X''_{i_{1}},X_{i_{2}}\right)h_{2}\left(X_{i_{3}},X_{i_{4}}\right)\right]=0,
\end{equation*}
so we can now write
\begin{align*}
&\left|E\left[h_{2}\left(X_{i_{1}},X_{i_{2}}\right)h_{2}\left(X_{i_{3}},X_{i_{4}}\right)\right]\right|\\
=&\left|E\left[h_{2}\left(X'_{i_{1}},X'_{i_{2}}\right)h_{2}\left(X'_{i_{3}},X'_{i_{4}}\right)-h_{2}\left(X''_{i_{1}},X_{i_{2}}\right)h_{2}\left(X_{i_{3}},X_{i_{4}}\right)\right]\right|\\
\leq& \left|E\left[h_{2}\left(X'_{i_{1}},X'_{i_{2}}\right)h_{2}\left(X'_{i_{3}},X'_{i_{4}}\right)-h_{2}\left(X''_{i_{1}},X'_{i_{2}}\right)h_{2}\left(X'_{i_{3}},X'_{i_{4}}\right)\right]\right|\\
&+\left|E\left[h_{2}\left(X''_{i_{1}},X'_{i_{2}}\right)h_{2}\left(X'_{i_{3}},X'_{i_{4}}\right)-h_{2}\left(X''_{i_{1}},X_{i_{2}}\right)h_{2}\left(X'_{i_{3}},X'_{i_{4}}\right)\right]\right|\\
&+\left|E\left[h_{2}\left(X''_{i_{1}},X_{i_{2}}\right)h_{2}\left(X'_{i_{3}},X'_{i_{4}}\right)-h_{2}\left(X''_{i_{1}},X_{i_{2}}\right)h_{2}\left(X_{i_{3}},X'_{i_{4}}\right)\right]\right|\\
&+\left|E\left[h_{2}\left(X''_{i_{1}},X_{i_{2}}\right)h_{2}\left(X_{i_{3}},X'_{i_{4}}\right)-h_{2}\left(X''_{i_{1}},X_{i_{2}}\right)h_{2}\left(X_{i_{3}},X_{i_{4}}\right)\right]\right|.
\end{align*}
In order to keep this proof short , we treat only the first of the four summands. Define
\begin{equation*}
h_{2,K}\left(x,y\right) = \left\{ \begin{array}{ll}
h_{2}\left(x,y\right) & \textrm{if $\left|h_{2}\left(x,y\right)\right|\leq\sqrt{K}$}\\
\sqrt{K} & \textrm{if $h_{2}\left(x,y\right)>\sqrt{K}$}\\
-\sqrt{K} & \textrm{if $h_{2}\left(x,y\right)<-\sqrt{K}$}
\end{array} \right.
\end{equation*}
It is clear that $h_{2,K}$ is $P$-Lipschitz-continuous, too. We get that
\begin{align*}
&\left|E\left[h_{2}\left(X'_{i_{1}},X'_{i_{2}}\right)h_{2}\left(X'_{i_{3}},X'_{i_{4}}\right)-h_{2}\left(X''_{i_{1}},X'_{i_{2}}\right)h_{2}\left(X'_{i_{3}},X'_{i_{4}}\right)\right]\right]\\
=&\left|E\left[\left(h_{2}\left(X'_{i_{1}},X'_{i_{2}}\right)-h_{2}\left(X''_{i_{1}},X'_{i_{2}}\right)\right)h_{2}\left(X'_{i_{3}},X'_{i_{4}}\right)\right]\right|\\
\leq&E\left[\left|\left(h_{2,K}\left(X'_{i_{1}},X'_{i_{2}}\right)-h_{2,K}\left(X''_{i_{1}},X'_{i_{2}}\right)\right)h_{2,K}\left(X'_{i_{3}},X'_{i_{4}}\right)\right|\mathds{1}_{\left\{\left|X'_{i_{1}}-X''_{i_{1}}\right|\leq\alpha_{\lfloor\frac{m}{3}\rfloor}\right\}}\right]\\
&+E\left[\left|\left(h_{2,K}\left(X'_{i_{1}},X'_{i_{2}}\right)-h_{2,K}\left(X''_{i_{1}},X'_{i_{2}}\right)\right) h_{2,K}\left(X'_{i_{3}},X'_{i_{4}}\right)\right|\mathds{1}_{\left\{\left|X'_{i_{1}}-X''_{i_{1}}\right|>\alpha_{\lfloor\frac{m}{3}\rfloor}\right\}}\right]\\
&+E\left[\left|h_{2,K}\left(X'_{i_{1}},X'_{i_{2}}\right)h_{2,K}\left(X'_{i_{3}},X'_{i_{4}}\right)-h_{2}\left(X'_{i_{1}},X'_{i_{2}}\right)h_{2}\left(X'_{i_{3}},X'_{i_{4}}\right)\right|\right]\\
&+E\left[\left|h_{2,K}\left(X''_{i_{1}},X'_{i_{2}}\right)h_{2,K}\left(X'_{i_{3}},X'_{i_{4}}\right)-h_{2}\left(X''_{i_{1}},X'_{i_{2}}\right)h_{2}\left(X'_{i_{3}},X'_{i_{4}}\right)\right|\right]\\
\end{align*}
Because of the $P$-Lipschitz-continuity and $\left|h_{2,K}\left(X'_{3},X'_{4}\right)\right|\leq\sqrt{K}$, the first summand is smaller than $2L\epsilon\sqrt{K}$. By property 3 of $\left(X'_n\right)_{n\in\Z}$ and $\left(X''_n\right)_{n\in\Z}$, the second term is bounded by
\begin{equation*}
P\left[\left|X''_{i_{1}}-X'_{i_{1}}\right|\geq\alpha_{\lfloor\frac{m}{3}\rfloor}\right]2K\leq 2\alpha_{\lfloor\frac{m}{3}\rfloor}K.
\end{equation*}
As $h_{2}\left(X'_{i_{1}},X'_{i_{2}}\right)h_{2}\left(X'_{i_{3}},X'_{i_{4}}\right)$ and $h_{2}\left(X''_{i_{1}},X'_{i_{2}}\right)h_{2}\left(X'_{i_{3}},X'_{i_{4}}\right)$ are random variables with $(1+\frac{\delta}{2})$-moments smaller than $M$ from the definition of the uniform $(2+\delta)$-moments, the third and the fourth summand are bounded by $\frac{M}{K^{\frac{\delta}{2}}}$.  Totally, we get
\begin{equation*}
\left|E\left[h_{2}\left(X'_{i_{1}},X'_{i_{2}}\right)h_{2}\left(X'_{i_{3}},X'_{i_{4}}\right)-h_{2}\left(X''_{i_{1}},X'_{i_{2}}\right)h_{2}\left(X'_{i_{3}},X'_{i_{4}}\right)\right]\right|\leq 2L\epsilon\sqrt{K}+2\alpha_{\lfloor\frac{m}{3}\rfloor}K+2\frac{M}{K^{\frac{\delta}{2}}}.
\end{equation*}
Setting $K=\left(\alpha_{\lfloor\frac{k}{3}\rfloor}+\beta\left(\lfloor\frac{k}{3}\rfloor\right)\right)^{-\frac{2}{2+\delta}}M^{\frac{2}{2+\delta}}$, keeping in mind that this $K$ is nondecreasing and treating the other three summands in the same way, one easily obtains
\begin{equation*}
 \left|E\left[h_{2}\left(X_{i_{1}},X_{i_{2}}\right)h_{2}\left(X_{i_{3}},X_{i_{4}}\right)\right]\right|\leq CM^{\frac{2}{2+\delta}}\left(\beta^{\frac{\delta}{2+\delta}}\left(\lfloor\frac{m}{3}\rfloor\right)+\alpha_{\lfloor\frac{m}{3}\rfloor}^{\frac{\delta}{2+\delta}}\right)
\end{equation*}
for a constant $C$, which proofs the lemma for a $P$-Lipschitz-continuous kernel. Let now $h_2$ satisfy the variation condition. Obviously, the same holds for $h_{2,K}$ and
\begin{align*}
&\left|E\left[h_{2}\left(X_{i_{1}},X_{i_{2}}\right)h_{2}\left(X_{i_{3}},X_{i_{4}}\right)\right]\right|\\
\leq& \left|E\left[h_{2}\left(X'_{i_{1}},X'_{i_{2}}\right)h_{2}\left(X'_{i_{3}},X'_{i_{4}}\right)-h_{2}\left(X''_{i_{1}},X_{i_{2}}\right)h_{2}\left(X'_{i_{3}},X'_{i_{4}}\right)\right]\right|\\
&+\left|E\left[h_{2}\left(X''_{i_{1}},X_{i_{2}}\right)h_{2}\left(X'_{i_{3}},X'_{i_{4}}\right)-h_{2}\left(X''_{i_{1}},X_{i_{2}}\right)h_{2}\left(X_{i_{3}},X_{i_{4}}\right)\right]\right|.
\end{align*}
Again, we concentrate on the first summand. By the variation condition, we have that
\begin{multline*}
E\left[\left|\left(h_{2,K}\left(X'_{i_{1}},X'_{i_{2}}\right)-h_{2,K}\left(X''_{i_{1}},X_{i_{2}}\right)\right)h_{2,K}\left(X'_{i_{3}},X'_{i_{4}}\right)\right|\mathds{1}_{\left\{\left|X'_{i_{1}}-X''_{i_{1}}\right|\leq\alpha_{\lfloor\frac{m}{3}\rfloor},\left|X'_{i_{2}}-X_{i_{2}}\right|\leq\alpha_{\lfloor\frac{m}{3}\rfloor}\right\}}\right]\\
\leq \sqrt{K} E\left[\sup_{\left\|(x,y)-(X''_{i_1},X_{i_2})\right\|\leq \sqrt{2}\alpha_{\lfloor\frac{m}{3}\rfloor},\ \left\|(x',y')-(X''_{i_1},X_{i_2})\right\|\leq \sqrt{2}\alpha_{\lfloor\frac{m}{3}\rfloor}}\left|h_{2,K}\left(x,y\right)-h_{2,K}\left(x',y'\right)\right|\right]\\
\leq 2\sqrt{2}\sqrt{K}L\epsilon.
\end{multline*}

As $P\left[\left|X_{i_1}''-X_{i_{1}}\right|\geq\alpha_{\lfloor\frac{m}{3}\rfloor}\right]\leq\alpha_{\lfloor\frac{m}{3}\rfloor}$, $P\left[\left|X_{i_2}'-X_{i_{2}}\right|\geq\alpha_{\lfloor\frac{m}{3}\rfloor}\right]\leq\alpha_{\lfloor\frac{m}{3}\rfloor}+\beta\left(\lfloor\frac{m}{3}\rfloor\right)$, it follows that
\begin{multline*}
E\left[\left|\left(h_{2,K}\left(X'_{i_{1}},X'_{i_{2}}\right)-h_{2,K}\left(X''_{i_{1}},X_{i_{2}}\right)\right)h_{2,K}\left(X'_{i_{3}},X'_{i_{4}}\right)\right|\mathds{1}_{\left\{\left|X'_{i_{1}}-X''_{i_{1}}\right|>\alpha_{\lfloor\frac{m}{3}\rfloor},\left|X'_{i_{2}}-X_{i_{2}}\right|>\alpha_{\lfloor\frac{m}{3}\rfloor}\right\}}\right]\\
\leq P\left[\left|X''_{i_{1}}-X'_{i_{1}}\right|\geq\alpha_{\lfloor\frac{m}{3}\rfloor},\left|X_{i_{2}}-X'_{i_{2}}\right|\geq\alpha_{\lfloor\frac{m}{3}\rfloor}\right]2K\leq 4\left(\alpha_{\lfloor\frac{m}{3}\rfloor}+\beta\left(\lfloor\frac{m}{3}\rfloor\right)\right)K.
\end{multline*}
The rest of the proof is the same as above.

\end{proof}

Yoshihara \cite{yosh} deduced the following moment bound under condition (1) with the help of Lemma \ref{lem9}. The result follows from condition (2) and (3) in the same way using the Lemmas \ref{lem10} and \ref{lem11} instead.

\begin{lemma}\label{lem12}
Let $\left(X_n\right)_{n\in\mathds{N}}$ be a stationary process and $h_2$ a degenerate, centered kernel with uniform $(2+\delta)$-moments for a $\delta>0$. Let be $\tau\geq0$ such that one of the following three conditions hold:
\begin{enumerate}
 \item  $\left(X_n\right)_{n\in\mathds{N}}$ is absolutely regular and $\sum_{k=0}^{n}k\beta^{\frac{\delta}{2+\delta}}\left(k\right)=O\left(n^\tau\right)$.
\item $\left(X_n\right)_{n\in\mathds{N}}$ is strongly mixing, $E\left|X_1\right|^\gamma<\infty$ for a $\gamma>0$, $h_2$ satisfies the $P$-Lipschitz-continuity or the variation condition and $\sum_{k=0}^{n}k\alpha^{\frac{2\gamma\delta}{\gamma\delta+\delta+5\gamma+2}}\left(k\right)=O\left(n^\tau\right)$.
\item $\left(X_n\right)_{n\in\mathds{N}}$ is a $1$-approximating functional of an absolutely regular process and $h_2$ satisfies the $P$-Lipschitz-continuity or the variation condition. For $\alpha_L=\sqrt{2\sum_{i=L}^{\infty}a_i}$: $\sum_{k=0}^{n}k\left(\beta^{\frac{\delta}{2+\delta}}\left(k\right)+\alpha^{\frac{\delta}{2+\delta}}_k\right)=O\left(n^\tau\right)$.
\end{enumerate}
Then
\begin{equation*}
\sum_{i_{1},i_{2},i_{3},i_{4}=1}^{n}\left|E\left[h_{2}\left(X_{i_{1}},X_{i_{2}}\right)h_{2}\left(X_{i_{3}},X_{i_{4}}\right.)\right]\right|=O\left(n^{2+\tau}\right).
\end{equation*}
\end{lemma}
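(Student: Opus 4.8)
The plan is to deduce the global estimate from the pointwise covariance bounds of Lemmas~\ref{lem9}--\ref{lem11} together with a purely combinatorial count of the index tuples. In each of the three cases the relevant lemma supplies a constant $C$ and a nonincreasing function $\rho$ for which
\[
\left|E\left[h_2\left(X_{i_1},X_{i_2}\right)h_2\left(X_{i_3},X_{i_4}\right)\right]\right|\le C\,\rho(m),\qquad m=\max\left\{i_{(2)}-i_{(1)},\,i_{(4)}-i_{(3)}\right\},
\]
with $\rho(m)=\beta^{\delta/(2+\delta)}(m)$ under condition (1), $\rho(m)=\alpha^{2\gamma\delta/(3\gamma\delta+\delta+5\gamma+2)}(m)$ under condition (2), and $\rho(m)=\beta^{\delta/(2+\delta)}(\lfloor m/3\rfloor)+\alpha_{\lfloor m/3\rfloor}^{\delta/(2+\delta)}$ under condition (3). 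These bounds hold for every tuple in $\{1,\dots,n\}^4$ once coinciding indices are read as gaps equal to $0$; the reason $m$ is the decisive quantity is that a large value of $m$ isolates either the smallest or the largest of the four indices, whereupon degeneracy of $h_2$ forces the conditional expectation of the factor containing the isolated variable to vanish. Consequently no diagonal terms need to be singled out.

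With this reduction I would reorganise the fourfold sum according to the value of $m$. Writing the sorted indices as $i_{(1)}\le i_{(2)}\le i_{(3)}\le i_{(4)}$ and setting $g_1=i_{(2)}-i_{(1)}$, $g_2=i_{(3)}-i_{(2)}$, $g_3=i_{(4)}-i_{(3)}$, a sorted configuration is determined by $(i_{(1)},g_1,g_2,g_3)$ subject to $i_{(1)}\ge 1$ and $i_{(1)}+g_1+g_2+g_3\le n$. For a fixed $m=\max\{g_1,g_3\}$ the pair $(g_1,g_3)$ ranges over the $O(m)$ lattice points of $\{0,\dots,m\}^2$ having at least one coordinate equal to $m$, whereas $(i_{(1)},g_2)$ is constrained only by $i_{(1)}+g_2\le n$, leaving $O(n^2)$ choices. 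Multiplying by the at most $4!$ orderings that produce a given sorted tuple, the number of tuples in $\{1,\dots,n\}^4$ with $\max\{i_{(2)}-i_{(1)},i_{(4)}-i_{(3)}\}=m$ is $O(n^2m)$.

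Summing the pointwise bound over these groups then gives
\[
\sum_{i_1,i_2,i_3,i_4=1}^{n}\left|E\left[h_2\left(X_{i_1},X_{i_2}\right)h_2\left(X_{i_3},X_{i_4}\right)\right]\right|\le \sum_{m=0}^{n-1}O\!\left(n^2m\right)\rho(m)=O\!\left(n^2\sum_{m=0}^{n}m\,\rho(m)\right).
\]
Under conditions (1) and (2) the summability assumption is precisely $\sum_{m=0}^{n}m\,\rho(m)=O(n^\tau)$, and the asserted bound $O(n^{2+\tau})$ follows at once. Under condition (3) the argument $\lfloor m/3\rfloor$ is removed by the substitution $k=\lfloor m/3\rfloor$: each $k$ occurs for at most three values of $m$, all satisfying $m\le 3(k+1)$, so $\sum_m m\,\rho(m)\le C\sum_k k\bigl(\beta^{\delta/(2+\delta)}(k)+\alpha_k^{\delta/(2+\delta)}\bigr)=O(n^\tau)$, again yielding $O(n^{2+\tau})$.

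I expect the only genuinely delicate point to be the counting step, namely recognising that the correct weight attached to each value of $m$ is exactly $n^2m$. The factor $n^2$ stems from the position $i_{(1)}$ of the index block together with the unconstrained middle gap $g_2$, while the additional factor $m$ comes from the two end gaps and is precisely what matches the extra factor $k$ present in all three summability hypotheses; mis-accounting either of these would produce the wrong power of $n$. The remaining care needed is to confirm that the covariance bounds of Lemmas~\ref{lem9}--\ref{lem11}, whose proofs are written for strictly ordered indices, persist verbatim when indices coincide, which they do because the degeneracy argument isolating an extreme index is unaffected by equalities among the other indices.
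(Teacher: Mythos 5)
Your argument is correct and is precisely the standard derivation that the paper leaves implicit (it only cites Yoshihara for case (1) and asserts that cases (2) and (3) follow in the same way from Lemmas \ref{lem10} and \ref{lem11}): bound each summand by $C\rho(m)$ with $m=\max\{i_{(2)}-i_{(1)},i_{(4)}-i_{(3)}\}$, count the tuples realizing each value of $m$, and feed the result into the summability hypothesis. The only cosmetic point is that your tuple count $O(n^2m)$ should read $O(n^2(m+1))$ so that the $O(n^2)$ tuples with $m=0$ (whose terms are bounded by a constant via the uniform $(2+\delta)$-moments and Cauchy--Schwarz) are not dropped; they contribute $O(n^2)=O(n^{2+\tau})$ since $\tau\ge0$, so nothing changes.
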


\begin{lemma}\label{lem13}
 If $h$ satisfies the $P$-Lipschitz-continuity or the variation condition, then the condition holds also for $h_2$.
\end{lemma}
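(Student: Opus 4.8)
The plan is to exploit the Hoeffding decomposition $h_2(x,y) = h(x,y) - h_1(x) - h_1(y) - \theta$ together with the fact that both continuity conditions involve only \emph{differences} of kernel values, so that the additive constant $\theta$ and the shared summands cancel. For the $P$-Lipschitz case I would take $(X,Y)$ and $(X',Y)$ with one of the admissible common distributions and write out
\[
 h_2(X,Y) - h_2(X',Y) = \bigl(h(X,Y) - h(X',Y)\bigr) - \bigl(h_1(X) - h_1(X')\bigr),
\]
since $h_1(Y)$ and $\theta$ drop out. By the triangle inequality it then suffices to bound the two pieces separately after multiplying by $\mathds{1}_{\{|X-X'|\leq\epsilon\}}$ and taking expectations. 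The first piece is precisely the left-hand side of the $P$-Lipschitz condition for $h$, hence $\leq L\epsilon$.

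For the second piece I would use $h_1(x) = \int h(x,u)\,dF(u) - \theta$, where $F$ is the law of $X_1$, so that $|h_1(X)-h_1(X')| \leq \int |h(X,u)-h(X',u)|\,dF(u)$ by Jensen. Introducing a copy $U\sim F$ independent of $(X,X')$ and using Fubini together with the fact that the indicator depends only on $(X,X')$ gives
\[
 E\bigl[|h_1(X)-h_1(X')|\,\mathds{1}_{\{|X-X'|\leq\epsilon\}}\bigr]
 \leq E\bigl[|h(X,U)-h(X',U)|\,\mathds{1}_{\{|X-X'|\leq\epsilon\}}\bigr].
\]
Since $U$ is independent of $(X,X')$ and has the marginal law of $X_1$, both pairs $(X,U)$ and $(X',U)$ carry the product distribution $\mathcal{P}_{X_1}\times\mathcal{P}_{X_1}$, which is among those admitted in the $P$-Lipschitz condition; applying that condition to the triple $(X,X',U)$ bounds the right-hand side by $L\epsilon$. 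Combining the two pieces shows $h_2$ is $P$-Lipschitz with constant $2L$.

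For the variation condition the same cancellation yields the three-term bound
\[
 |h_2(x,y)-h_2(x',y')| \leq |h(x,y)-h(x',y')| + |h_1(x)-h_1(x')| + |h_1(y)-h_1(y')|,
\]
and I would take the supremum over $\|(x,y)-(X,Y)\|\leq\epsilon$, $\|(x',y')-(X,Y)\|\leq\epsilon$ termwise, with $(X,Y)\sim\mathcal{P}_{X_1}\times\mathcal{P}_{X_1}$. The $h$-term gives $L\epsilon$ directly. For $|h_1(x)-h_1(x')|$, which depends on $x,x'$ only, the supremum collapses to one over $|x-X|\leq\epsilon$, $|x'-X|\leq\epsilon$; writing $h_1$ again as an integral against $F$ and using that the supremum of an integral is at most the integral of the supremum, then introducing an independent $U\sim F$, reduces it to
\[
 E\Bigl[\sup_{|x-X|\leq\epsilon,\,|x'-X|\leq\epsilon}|h(x,U)-h(x',U)|\Bigr].
\]
Fixing the second coordinate at $U$ is the special case $y=y'=U$ of the variation supremum for $h$ at $(X,U)\sim\mathcal{P}_{X_1}\times\mathcal{P}_{X_1}$, so this is $\leq L\epsilon$; the term $|h_1(y)-h_1(y')|$ is symmetric, giving $h_2$ the variation condition with constant $3L$.

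The algebra of the Hoeffding decomposition and the triangle inequalities are routine. The step that needs the most care is the distributional bookkeeping in the $P$-Lipschitz case: one must verify that averaging out $h_1$ produces auxiliary pairs lying in the admissible list (here the product law $\mathcal{P}_{X_1}\times\mathcal{P}_{X_1}$), and, in the variation case, that the interchange of the supremum and the integral against $F$ is legitimate. Once these are checked, the conclusion follows with a constant that is a small multiple of the original constant $L$.
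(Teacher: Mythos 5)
Your argument is correct and follows essentially the same route as the paper: reduce via the Hoeffding decomposition to checking the condition for $h_1$, write $h_1$ as an integral of $h$ against the marginal law, and interchange the supremum (respectively the indicator/expectation) with that integral so that the condition for $h$ with the product distribution $\mathcal{P}_{X_1}\times\mathcal{P}_{X_1}$ applies. The only difference is that for the $P$-Lipschitz case the paper simply cites Lemma~3.3 of Dehling and Wendler \cite{dehl}, whereas you spell out that argument explicitly; your distributional bookkeeping there is sound.
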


\begin{proof} For $P$-Lipschitz-continuous kernels, we refer to Dehling, Wendler \cite{dehl}, proof of Lemma 3.3. Let now $h$ satisfy the variation condition. As $h_2\left(x,y\right)=h\left(x,y\right)-h_1\left(x\right)-h_1\left(y\right)-\theta$, it suffices to verify this condition for $h_1$. Recall that $h_1\left(x\right)=E\left[h\left(x,Y\right)\right]-\theta$, so
\begin{align*}
&E\left[\sup_{\left\|(x,y)-(X,Y)\right\|\leq \epsilon,\ \left\|(x',y')-(X,Y)\right\|\leq \epsilon}\left|h_1\left(x\right)-h_1\left(x'\right)\right|\right]\\
=&E\left[\sup_{\left|x-X\right|\leq \epsilon,\ \left|x'-X\right|\leq \epsilon}\left|E\left[h\left(x,Y\right)\right]-E\left[h\left(x',Y\right)\right]\right|\right]\displaybreak[0]\\
\leq&E\left[\sup_{\left|x-X\right|\leq \epsilon,\ \left|x'-X\right|\leq \epsilon}E\left|h\left(x,Y\right)-h\left(x',Y\right)\right|\right]\displaybreak[0]\\
\leq&E\left[\sup_{\left|x-X\right|\leq \epsilon,\ \left|x'-X\right|\leq \epsilon}\left|h\left(x,Y\right)-h\left(x',Y\right)\right|\right]\\
\leq&E\left[\sup_{\left\|(x,y)-(X,Y)\right\|\leq \epsilon,\ \left\|(x',y')-(X,Y)\right\|\leq \epsilon}\left|h\left(x,y\right)-h\left(x',y'\right)\right|\right]\leq L\epsilon.
\end{align*}

\end{proof}

\section{Proofs of the theorems.}

\begin{proof}[Proof of Theorem \ref{theo1}]:
We define
\begin{align*}
 Q_n&=\sum_{1\leq i_1<i_2\leq n}h_2\left(X_{i_1},X_{i_2}\right)\\
a_n&=\frac{1}{n^{1+\frac{\tau}{2}}\log^{\frac{3}{2}}n\log\log n}.
\end{align*}
With the method of subsequences, it suffices to show that
\begin{align}\label{line29}
a_{2^l}Q_{2^l}\left(h_2\right)&\xrightarrow{\text{a.s.}}0\\
\label{line30}\max_{2^{l-1}\leq n<2^l}\left|a_nQ_n-a_{2^{l-1}}Q_{2^{l-1}}\right|&\xrightarrow{\text{a.s.}}0
\end{align}
as $l\rightarrow\infty$. We use the Chebyshev inequality and Lemma \ref{lem12} to prove the first line. For every $\epsilon>0$:
\begin{equation*}
\sum_{l=1}^{\infty}P\left[\left|a_{2^l}Q_{2^l}\left(h_2\right)\right|>\epsilon\right]\leq\frac{1}{\epsilon^2}\sum_{l=1}^{\infty}a_{2^l}^2E\left[Q_{2^l}^2\left(h_2\right)\right]\leq C\frac{1}{\epsilon^2}\sum_{l=1}^{\infty}\frac{1}{l^{\frac{3}{2}}\log l}<\infty
\end{equation*}
(\ref{line29}) follows with the Borel-Cantelli Lemma. To prove (\ref{line30}), we first have to find a bound for the second moments, using a well known chaining technique. For example, by the triangle inequality we have
\begin{equation*}
\left|a_{15}Q_{15}-a_8Q_8\right|\leq\left|a_{15}Q_{15}-a_{14}Q_{14}\right|+\left|a_{14}Q_{14}-a_{12}Q_{12}\right|+\left|a_{12}Q_{12}-a_8Q_8\right|.
\end{equation*}
Using such a decomposition for all $n$ with $2^{l-1}\leq n<2^l$, we conclude that
\begin{multline*}
\max_{2^{l-1}\leq n<2^l}\left|a_nQ_n-a_{2^{l-1}}Q_{2^{l-1}}\right|\\
\leq \sum_{d=1}^{l} \max_{i=1,\ldots,2^{l-d}}\left|a_{2^{l-1}+i2^{d-1}}Q_{2^{l-1}+i2^{d-1}}-a_{2^{l-1}+(i-1)2^{d-1}}Q_{2^{l-1}+(i-1)2^{d-1}}\right|.
\end{multline*}
As for any random variables $Y_1,\ldots,Y_n$: $E\left(\max\left|Y_i\right|\right)^2\leq \sum EY_i^2$, it follows that

\begin{multline*}
\quad E\left[\left(\max_{2^{l-1}\leq n<2^l}\left|a_nQ_n-a_{2^{l-1}}Q_{2^{l-1}}\right|\right)^2\right]\\
\shoveleft\leq l\sum_{d=1}^{l}\sum_{i=1}^{2^{l-d}}E\left[\left(a_{2^{l-1}+i2^{d-1}}Q_{2^{l-1}+i2^{d-1}}-a_{2^{l-1}+(i-1)2^{d-1}}Q_{2^{l-1}+(i-1)2^{d-1}}\right)^2\right]\\
\shoveleft\leq l\sum_{d=1}^{l}\sum_{i=1}^{2^{l-d}}E\left[\left(a_{2^{l-1}+i2^{d-1}}\left(Q_{2^{l-1}+i2^{d-1}}-Q_{2^{l-1}+(i-1)2^{d-1}}\right)\right.\right.\\
\left.\left.+\left(a_{2^{l-1}+i2^{d-1}}-a_{2^{l-1}+(i-1)2^{d-1}}\right)Q_{2^{l-1}+(i-1)2^{d-1}}\right)^2\right]\\
\shoveleft\leq l\sum_{d=1}^{l}\sum_{i=1}^{2^{l-d}}2a_{2^{l-1}+i2^{d-1}}^2E\left[\left(Q_{2^{l-1}+i2^{d-1}}-Q_{2^{l-1}+(i-1)2^{d-1}}\right)^2\right]\\
+l\sum_{d=1}^{l}\sum_{i=1}^{2^{l-d}}2\left(a_{2^{l-1}+i2^{d-1}}-a_{2^{l-1}+(i-1)2^{d-1}}\right)^2E\left[Q^2_{2^{l-1}+(i-1)2^{d-1}}\right]\\
\\
\shoveleft=\sum_{d=1}^{l}2a_{2^{l-1}+i2^{d-1}}^2E\left[\sum_{i=1}^{2^{l-d}}\left(Q_{2^{l-1}+i2^{d-1}}-Q_{2^{l-1}+(i-1)2^{d-1}}\right)^2\right]\\
+l\sum_{d=1}^{l}\sum_{i=1}^{2^{l-d}}2\left(a_{2^{l-1}+i2^{d-1}}+a_{2^{l-1}+(i-1)2^{d-1}}\right)\left(a_{2^{l-1}+i2^{d-1}}-a_{2^{l-1}+(i-1)2^{d-1}}\right)E\left[Q^2_{2^{l-1}+(i-1)2^{d-1}}\right]\\
\leq l^26a^2_{2^{l-1}}\sum_{i_{1},i_{2},i_{3},i_{4}=1}^{2^l}\left|E\left[h_{2}\left(X_{i_{1}},X_{i_{2}}\right)h_{2}\left(X_{i_{3}},X_{i_{4}}\right)\right]\right|\leq C\frac{1}{l\log^2 l}.
\end{multline*}
In the last line we used the fact that the sequence $\left(|a|_{n}\right)_{n\in\mathds{N}}$ is decreasing and Lemma \ref{lem12}. It now follows for all $\epsilon>0$ with the Chebyshev inequality
\begin{multline*}
\sum_{l=1}^{\infty}P\left[\max_{2^{l-1}\leq n<2^l}\left|a_nQ_n-a_{2^{l-1}}Q_{2^{l-1}}\right|>\epsilon\right]\leq \frac{1}{\epsilon^2}\sum_{l=1}^{\infty}E\left[\left(\max_{2^{l-1}\leq n<2^l}\left|a_nQ_n-a_{2^{l-1}}Q_{2^{l-1}}\right|\right)^2\right]\\
\leq\frac{C}{\epsilon^2}\sum_{l=1}^{\infty}\frac{1}{l\log^2 l}<\infty,
\end{multline*}
the Borel-Cantelli Lemma completes the proof.

\end{proof}

\begin{proof}[Proof of Theorem \ref{theo2}]:
We give the proof the theorem only under condition (1) and omit the similar proofs under conditions (2) and (3) (where Lemma \ref{lem13} is used to conclude that $h_2$ is $P$-Lipschitz-continuous and Rio's result \cite{rio} has to be replaced by the result of Philipp and Stout \cite[chapter 7]{phi2} under condition (3)).

First note that $E\left|h_1\left(X_n\right)\right|^{2+\delta}\leq E\left|h\left(X,Y\right)\right|^{2+\delta}<\infty$ ($X$, $Y$ being independent). By standard arguments
\begin{equation*}
 \frac{1}{n}\Var\left[\sum_{i=1}^{n}h_1\left(X_i\right)\right]\xrightarrow{n\rightarrow\infty}\sigma_\infty^2=\Var\left[h_1\left(X_0\right)\right]+2\sum_{i=1}^{\infty}\Cov\left[h_1\left(X_0\right),h_1\left(X_i\right)\right] 
\end{equation*}
an by Lemma \ref{lem12} $\Var\left[U_n\left(h_2\right)\right]\rightarrow0$. So we have
\begin{align}
 \Var\left[\sum_{1\leq i<j\leq n}h\left(X_i,X_j\right)\right]&\xrightarrow{n\rightarrow\infty}\infty\\
\frac{\Var\left[\sum_{1\leq i<j\leq n}h\left(X_i,X_j\right)\right]}{\Var\left[(n-1)\sum_{i=1}^{n}h_1\left(X_i\right)\right]}&\xrightarrow{n\rightarrow\infty}1.\label{line1}
\end{align}

As $\left(\beta(n)\right)_{n\in\N}$ is nonincreasing and $\sum_{k=0}^{n}k\beta^{\frac{\delta}{2+\delta}}\left(k\right)=O\left(n^{1-\epsilon}\right)$, it follows that
\begin{equation*}
 \beta\left(n\right)^{\frac{\delta}{2+\delta}}=O\left(n^{-(1+\epsilon)}\right)\ \Rightarrow\ \sum_{k=1}^{n}k^{\frac{1}{1+\delta}}\alpha\left(k\right)\leq\sum_{k=1}^{n}k^{\frac{1}{1+\delta}}\beta\left(k\right)<\infty,
\end{equation*}
so by Theorem 2 of Rio \cite{rio} the LIL holds for $\sum_{i=1}^{n}h_1\left(X_i\right)$. By Theorem 1 and Line \ref{line1}, this holds also for $\sum_{1\leq i<j\leq n}h\left(X_i,X_j\right)$.
\end{proof}

\small{

}\end{document}